\newtheorem{thm}{Theorem}[section]
\newtheorem{defn}[thm]{Definition}
\newtheorem{ex}[thm]{Example}
\newtheorem{lem}[thm]{Lemma}
\newtheorem{prop}[thm]{Proposition}
\newtheorem{ass}[thm]{Assertion}
\def\Ker{\mathrm{Ker}}
\def\Im{\mathrm{Im}}
\def\Hom{\mathrm{Hom}}
\def\Q{{\mathbb Q}}
\def\deg{{\rm deg}}
\title{On moduli subspaces of central extensions of rational H-spaces}
\author{Takahito Naito}
\date{}
\address{Department of Mathematical Sciences, Shinshu University, 3-1-1 Asahi, Matsumoto, Nagano 390-8621, Japan}
\email{naito@math.shinshu-u.ac.jp}
\keywords{H-space, algebraic loop, rational homotopy theory}
\subjclass[2000]{Primary 55P45; Secondary 55P62}
\begin{document}
\maketitle

\begin{abstract}
We investigate the moduli sets of central extensions of H-spaces enjoying inversivity, power associativity and Moufang properties. By considering rational H-extensions, it turns out that there is no relationship between the first and the second properties in general.
\end{abstract}

\section{Introduction}
We assume that a space has the homotopy type of a path connected CW-complexes with a nondegenerate base point $*$ and that all maps are based maps.\\
\indent
An {\it H-space} is a space $X$ endowed with  a map $\mu :X\times X \to X$ , called a multiplication, such that both the restrictions $\mu |_{X\times *}$ and $\mu |_{*\times X}$ are homotopic to the identity map of $X$. 
The multiplication naturally induces a binary operation on the homotopy set $[Y,X]$ for any space $Y$. In \cite{Jam1960}, James has proved that $[Y, X]$ is an algebraic loop; that is, it has a two sided unit element and for any elements $x$ and $y\in [Y,X]$, the equations $xa=y$ and $bx=y$ have unique solutions $a,b\in [Y,X]$.\\
\indent
Loop theoretic properties of H-spaces have been considered by several authors, for example, Curjel \cite{Cur1968} and Norman \cite{Nor1963}.
In \cite{AL1990}, Arkowitz and Lupton considered H-space structures on inversive, power associative and Moufang properties. They consider whether there exists an H-space structure which does not satisfy the properties.
Thanks to general theory of algebraic loops, we see that Moufang property implies inversivity and power associativity. However, it is expected that there is no relationship between the latter two properties in general.\\
\indent
In \cite{Kac1995}, Kachi introduced central extensions of H-spaces which are called central H-extensions; see Definition \ref{H-ex}. Roughly speaking, for a given homotopy associative and homotopy commutative H-space $X_{1}$ and an H-space $X_{2}$, a central H-extension of $X_{1}$ by $X_{2}$ is defined to be the product $X_{1}\times X_{2}$ with a twisted multiplication. He also gave a classification theorem for the extensions. In fact, a quotient set of an appropriate  homotopy set classifies the equivalence classes of central H-extensions; see Theorem \ref{thm:Kac}. Such the quotient set is called {\it the moduli set} of H-extensions. Moreover, we refer to the subset of the moduli set corresponding the set of the equivalence classes of H-extensions enjoying a property $P$ via the bijection in the classifying theorem as {\it the moduli subset} of H-extensions associated with the property $P$.\\
\indent
The objective of the paper is to investigate the moduli subsets of central extensions of H-spaces associated with inversive, power associative or Moufang property. If a given H-space is $\Q$-local, then the moduli set of its central H-extensions is endowed with a vector space structure over $\Q$. It turns out that the moduli subset mentioned above inherit the vector space structure. This fact enable us to compare such the moduli subsets as vector space, and also to measure the size of the moduli set with the dimension of the vector space. Thus our main theorem (Theorem \ref{lem:4.4}) deduces the following result.  
\begin{ass}\label{ex1}
Let $S_{{\rm inv}}$, $S_{{\rm p.a}}$ and $S_{{\rm Mo}}$ be the moduli subsets of central H-extensions of the Eilenberg-MacLane space $K(\Q,n)$ by $K(\Q,m)$ associated with inversive, power associative and Moufang properties, respectively.
\begin{enumerate}
\item If $m$ is even, $n=km$ and $k\geq 4$ is even, then $S_{{\rm Mo}}$ is the proper subset of $S_{{\rm p.a}}$ and $S_{{\rm p.a}}$ is the proper subset of $S_{{\rm inv}}:$ $$S_{{\rm Mo}}\subsetneq S_{{\rm p.a}}\subsetneq S_{{\rm inv}}.$$
\item If $m$ is even, $n=km$ and $k\geq 5$ is odd, then $S_{{\rm Mo}}$ is the proper subset of $S_{{\rm p.a}}\cap  S_{{\rm inv}}$. Moreover, $S_{{\rm p.a}}\cap  S_{{\rm inv}}$ is a proper subset of $S_{{\rm p.a}}$ and $S_{{\rm inv}}:$ $$S_{{\rm Mo}}\subsetneq S_{{\rm p.a}} \cap S_{{\rm inv}} \subsetneq S_{{\rm p.a}} \text{ \ and \ } S_{{\rm p.a}} \cap S_{{\rm inv}} \subsetneq S_{{\rm inv}}.$$
\item Otherwise, $S_{{\rm inv}}=S_{{\rm p.a}}=S_{{\rm Mo}}$. 
\end{enumerate}
\end{ass}
\indent
The organization of this paper is as follows. In Section 2, we will recall several fundamental definitions and facts on H-spaces and algebraic loops. 
The classification theorem of central H-extensions are also described. In Section 3, we will present necessary and sufficient conditions for central H-extensions to be inversive, power associative and Moufang. 
The conditions allows us to describe the moduli set of extensions enjoying each of the properties in terms of a homotopy set of maps.
In Section 4, we will deal with rational H-spaces and the dimensions of the moduli spaces mentioned above. Moreover, several examples are presented. Assertion \ref{ex1} is proved at the end of Section 4.

\section{Preliminaries}
We begin by recalling the definition of an algebraic loop.
\begin{defn}{\rm
An} algebraic loop $(Q,\cdot )$ {\rm is a set $Q$ with a map
\begin{eqnarray*}
\cdot : Q\times Q\to Q ; \ (x,y)\longmapsto xy
\end{eqnarray*}
such that the equations $ax=b$ and $ya=b$ have unique solutions $x,y\in Q$ for all $a,b\in Q$. Moreover, there exists an element $e\in Q$ such that $xe=x=xe$ for all $x\in Q$. The element $e$ is called} unit. 
\end{defn}

Algebraic loops with particular multiplications are considered.

\begin{defn}\label{def:2.1.5}
{\rm An algebraic loop $(Q,\cdot)$ is called to be}
\begin{enumerate}
\item inversive {\rm if, to any element $x$ of $Q$, there exists a unique element $x^{-1}$ of $Q$ such that the equalities $x^{-1}x=e=xx^{-1}$ hold,}
\item power associative {\rm if  $x(xx)=(xx)x$ for any $x\in Q$,}
\item Moufang {\rm if $(x(yz))x=(xy)(zx)$ for any $x,y,z\in Q$,}
\item symmetrically associative {\rm if $(xy)x=x(yx)$ for any $x,y\in Q$.}
\end{enumerate} 
\end{defn}

In Section 3, we will focus on symmetrically associative property to examine Moufang property.

\begin{lem}{\rm \cite[Lemma 2A]{Bru1946}}\label{lem:2.3}
If an algebraic loop $Q$ is Moufang, then it is inversive.
\end{lem}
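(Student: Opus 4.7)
The plan is to prove inversivity by exhibiting, for each $x\in Q$, a two-sided inverse via a judicious substitution in the Moufang identity, and then to deduce uniqueness from the cancellation afforded by the loop axioms.

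First I would observe that the loop axioms already supply, for each element $x$, a \emph{unique} left inverse $x_L$ and a \emph{unique} right inverse $x_R$: the equations $ax=e$ and $xa=e$ have unique solutions by Definition 2.1. So the remaining task reduces to proving $x_L=x_R$ and verifying uniqueness of the resulting two-sided inverse.

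Next I would substitute $y=x_L$ and $z=x$ into the Moufang identity $(x(yz))x=(xy)(zx)$. Since $x_Lx=e$, the left-hand side collapses to $(x\cdot e)\cdot x=x\cdot x$, while the right-hand side becomes $(xx_L)(xx)=(xx_L)\cdot x^2$. Comparing the two sides yields $x^2=(xx_L)\cdot x^2$. The equation $a\cdot x^2=x^2$ has the unique solution $a=e$ by the loop axiom, hence $xx_L=e$. Thus $x_L$ is simultaneously a right inverse, and by uniqueness of $x_R$ we obtain $x_L=x_R$. Writing $x^{-1}:=x_L=x_R$, the equalities $x^{-1}x=e=xx^{-1}$ hold; uniqueness of any element $x^{-1}$ with this property is immediate, since such an element is in particular a left inverse of $x$ and left inverses are unique.

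The principal obstacle is really only the choice of substitution in the Moufang identity: one must arrange for one of the two inner factors to collapse to $e$ so that the expression $xx_L$ (or dually $x_Rx$) appears in a position where the loop cancellation axiom applies. Everything else is routine manipulation inside the loop structure, and no further input beyond Definitions 2.1 and 2.2 is needed.
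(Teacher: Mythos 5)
Your argument is correct: the substitution $y=x_L$, $z=x$ into $(x(yz))x=(xy)(zx)$ collapses the left side to $x^2$ and forces $xx_L=e$ by the uniqueness clause in the loop axiom applied to $y\cdot x^2=x^2$, after which uniqueness of one-sided inverses gives $x_L=x_R$ and the uniqueness of the two-sided inverse. The paper itself supplies no proof, simply citing Bruck's Lemma 2A, and your substitution argument is essentially the classical one found there, so nothing further is needed.
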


Thus we have implication between properties for multiplication described in Definition \ref{def:2.1.5}.

\[\xymatrix{
\fbox{power associative} & &   \fbox{inversive}\\
 \fbox{symmetrically associative} \ar@{=>}[u] & & \fbox{Moufang} \ar@{=>}[ll] \ar@{=>}[u]\\
}\]

We bring the loop theoretic notion to the realm of H-spaces.

\begin{defn}
{\rm Let $(X,\mu)$ be an H-space. A} left inverse $l: X\to X$ {\rm and a} right inverse $r:X\to X$ {\rm of an H-space $(X,\mu)$ are maps such that $\mu (l\times id)\Delta \simeq * \simeq \mu(id\times r)\Delta$.}
\end{defn}

\begin{defn}
{\rm An H-space $(X,\mu)$ is called to be}
\begin{enumerate}
\item inversive {\rm if the left inverse and the right inverse of $(X,\mu )$ are homotopic,}
\item power associative {\rm if  $\mu(\mu \times id)(\Delta \times id)\Delta \simeq \mu(id \times \mu)(\Delta \times id)\Delta$,}
\item Moufang {\rm if $\mu(\mu \times \mu)\theta \simeq \mu(\mu \times id)(id \times \mu \times id)\theta $, where $\theta : X^{3}\to X^{4}$ is defined by $\theta (x,y,z)=(x,y,z,x),$}
\item symmetrically associative {\rm if $\mu(\mu \times id)(id \times t )(\Delta \times id)\simeq \mu(id \times \mu)(id\times t)(\Delta \times id)$, where $t:X^{2}\to X^{2}$ is defined by $t(x,y)=(y,x)$}.
\end{enumerate} 
\end{defn}

The following proposition characterizes the above conditions on H-spaces by homotopy sets.

\begin{prop}
Let $(X,\mu)$ be an H-space. Then $(X,\mu)$ is inversive $($respectively, power associative, Moufang and symmetrically associative$)$ if and only if the homotopy set $[Y,X]$ is inversive $($respectively, power associative, Moufang and symmetrically associative$)$ for any space $Y$.
\end{prop}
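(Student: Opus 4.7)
The plan is to exploit the standard fact that for any space $Y$, the multiplication on $[Y,X]$ induced by $\mu$ is
\[
[f]\cdot [g]\;=\;[\mu\circ(f\times g)\circ\Delta_{Y}],
\]
so that every identity among maps built from $\mu$, the diagonal $\Delta$, the twist $t$, and projections translates into an identity in the loop $[Y,X]$ and conversely. First I would record this formula and check that the right inverse/left inverse of $[f]\in[Y,X]$ (which exist by James' theorem) are represented precisely by the composites $r\circ f$ and $l\circ f$, where $l,r:X\to X$ are the left and right inverse maps of $(X,\mu)$ (obtained by applying James' theorem to $[X,X]$ and the class of $\mathrm{id}_{X}$).

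For the forward direction ($\Rightarrow$) I would take each of the four H-space identities and show it is preserved under composition with an arbitrary tuple of maps $f_1,\dots,f_n:Y\to X$. For example, applying $\mu(\mu\times\mathrm{id})(\Delta\times\mathrm{id})\Delta\simeq\mu(\mathrm{id}\times\mu)(\Delta\times\mathrm{id})\Delta$ after $f:Y\to X$ yields the power-associative identity for $[f]\in[Y,X]$; the Moufang identity pulled back along $(f_{1},f_{2},f_{3}):Y\to X^{3}$ gives the Moufang identity for the three classes in $[Y,X]$; and $l\simeq r$ composed with any $f:Y\to X$ yields $l\circ f\simeq r\circ f$, whence the unique inverses in $[Y,X]$ agree. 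Naturality of $\Delta$ and $t$ with respect to $f$ is the only calculation needed and is straightforward.

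For the converse direction ($\Leftarrow$) I would use the Yoneda-type trick of choosing $Y=X^{n}$ (with $n=1$ for inversive and power associative, $n=2$ for symmetric associativity, $n=3$ for Moufang) and feeding the projections $p_{i}:X^{n}\to X$ into the loop identity on $[X^{n},X]$. Unpacking, for instance, $([p_{1}]([p_{2}][p_{3}]))[p_{1}]=([p_{1}][p_{2}])([p_{3}][p_{1}])$ in $[X^{3},X]$ gives exactly $\mu(\mu\times\mu)\theta\simeq\mu(\mu\times\mathrm{id})(\mathrm{id}\times\mu\times\mathrm{id})\theta$; for power associativity take $Y=X$ and $[\mathrm{id}_{X}]$; for symmetric associativity take $Y=X^{2}$ and the two projections; and for inversivity the left and right inverses of $[\mathrm{id}_{X}]\in[X,X]$ are exactly $l$ and $r$, so their equality in $[X,X]$ is $l\simeq r$.

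The main obstacle is purely bookkeeping: one must make sure that the map $X^{n}\to X$ obtained by plugging the $p_{i}$'s into the loop-theoretic word on each side of the identity actually equals, on the nose, the map appearing in the H-space definition (so that the homotopy in $[X^{n},X]$ is literally the required homotopy between those two maps). This reduces to repeated application of the identities $(f\times g)\circ\Delta_{Y}=(f,g)$ and $p_{i}\circ(p_{j_{1}},\ldots,p_{j_{k}})=p_{j_{i}}$, together with the fact that $\Delta_{X^{n}}$ together with the projections recovers $\theta$, $\mathrm{id}\times t$, etc. Once this identification is made, the proof reduces to four essentially identical naturality arguments.
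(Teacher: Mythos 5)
Your proposal is correct and takes essentially the same approach as the paper: the forward direction by precomposing the H-space homotopies with (tuples of) maps out of $Y$, and the converse by evaluating the loop identity in $[X^{n},X]$ on $[\mathrm{id}_{X}]$ or the projections $p_{i}$. The paper writes out only the inversive case and disposes of the rest with ``a similar argument shows the other cases,'' which is precisely the bookkeeping you describe.
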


\begin{proof}{\rm
Suppose $X$ is inversive. Let $l$ and $r$ be a left inverse and a right inverse of $X$, respectively. Then, for any $[f]\in [Y,X]$, $[lf]$ is the inverse element of $[f]$. Conversely, if $[Y,X]$ is inversive for any space $Y$, then there exists $[\nu ]\in [X,X]$ such that
$
[\nu][id_{X}]=*=[id_{X}][\nu].
$
A similar argument shows the other cases.
}\end{proof}

Here we recall the definition of the central extension of an H-spaces.

\begin{defn}\label{H-ex}{\rm \cite{Kac1995}
Let $(X_{1},\mu_{1})$ be a homotopy associative and homotopy commutative H-space and $(X_{2},\mu_{2})$ an H-space. An H-space $(X,\mu)$ is a} central H-extension of $(X_{1},\mu_{1})$ by $(X_{2},\mu_{2})$ {\rm if there exists a sequence of H-spaces
\begin{eqnarray*}
(X_{1},\mu_{1}) \stackrel{f_{1}}{\longrightarrow } ( X,\mu ) \stackrel{f_{2}}{\longrightarrow }(X_{2},\mu_{2})
\end{eqnarray*}
such that the sequence
\begin{eqnarray*}
e \longrightarrow [Y,X_{1}] \stackrel{f_{1*}}{\longrightarrow } [Y,X] \stackrel{f_{2*}}{\longrightarrow } [Y,X_{2}] \longrightarrow e
\end{eqnarray*}
is exact as algebraic loops for any space $Y$ and the image of $f_{1*}$ is contained in the center of $[Y,X]$, where $f_{i*}$ is the algebraic loop homomorphism induced by $f_{i}$.
}
\end{defn}

In order to describe the classification theorem for central H-extensions presented by Kachi, we recall the definition of H-deviations.
Let $(X,\mu)$ be an H-space and $Y$ a space. For any $[f],[g]\in [Y,X]$, let $D(f,g)$ be a unique element of $[Y,X]$ such that $D(f,g)[g]=[f]$.

\begin{defn}{\rm
Let $(X,\mu)$ and $(Y,\mu')$ be H-spaces, and let $f:X\to Y$ be a map. An} H-deviation of the map f {\rm is an element
\begin{eqnarray*}
HD(f) \in [X\wedge X,Y]
\end{eqnarray*}
such that $q^{*}(HD(f))=D(f\mu , \mu'(f\times f))$, where $q:X\times X \to X\wedge X$ is the quotient map.}
\end{defn}

The existence of H-deviations is insured by the following lemma.

\begin{lem}{\rm \cite[Lemma 1.3.5]{Zab1976}}\label{lem:2.2.2}
Let $(X,\mu)$ be an H-space. Then for any spaces $Y,Z$, 
\begin{eqnarray*}
e\longrightarrow [Y\wedge Z,X] \stackrel{q^{*}}{\longrightarrow  } [Y\times Z,X]\stackrel{i^{*}}{\longrightarrow } [Y\vee Z,X]\longrightarrow e
\end{eqnarray*}
is the short exact sequence as algebraic loops, where $i:Y\vee Z\to Y\times Z$ is the inclusion and $q:Y\times Z\to Y\wedge Z$ is the quotient map.
\end{lem}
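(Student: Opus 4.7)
The plan is to apply the contravariant functor $[-,X]$ to the cofibre sequence $Y\vee Z \xrightarrow{i} Y\times Z \xrightarrow{q} Y\wedge Z$ and upgrade the resulting sequence of pointed sets to a short exact sequence of algebraic loops, using the H-structure on $X$. First I would observe that for any $f\colon A\to B$ the pullback $f^{*}\colon[B,X]\to[A,X]$ is a loop homomorphism, because the H-multiplication makes $[-,X]$ a loop-valued contravariant functor: $f^{*}([g]\cdot[h])=[\mu\circ(g\times h)\circ\Delta_{B}\circ f]=f^{*}[g]\cdot f^{*}[h]$. Since a loop homomorphism is injective exactly when its kernel is trivial (the equation $w\cdot a=a$ has the unique solution $w=e$), it suffices to verify (i) $i^{*}$ is surjective, (ii) $\operatorname{Im}q^{*}=\ker i^{*}$, and (iii) $\ker q^{*}=\{e\}$.

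Step (i) is handled by sending $[(f_{1},f_{2})]\in[Y\vee Z,X]$ to $[\mu\circ(f_{1}\times f_{2})]\in[Y\times Z,X]$; the H-space identities $\mu(-,*)\simeq\mathrm{id}\simeq\mu(*,-)$ give the required restriction. Step (ii) is the standard cofibration argument: $q\circ i=*$ gives one inclusion, and the homotopy extension property of $i$ deforms any $F$ with $F\circ i\simeq *$ to a homotopic map strictly constant on $Y\vee Z$, which then factors through the quotient $q$.

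The crux is (iii). Suppose $g\colon Y\wedge Z\to X$ satisfies $g\circ q\simeq *$ via $H\colon(Y\times Z)\times I\to X$. Because $q|_{Y\vee Z}=*$, the restriction $H|_{(Y\vee Z)\times I}$ is a self-homotopy of the null map and so represents a class $\alpha\in[\Sigma(Y\vee Z),X]$; this is precisely the obstruction to $H$ descending through $q\times\mathrm{id}$. I would kill $\alpha$ by producing $\widetilde\alpha\in[\Sigma(Y\times Z),X]$ with $(\Sigma i)^{*}\widetilde\alpha=\alpha$: splitting $\alpha=\alpha_{Y}\vee\alpha_{Z}$ along $\Sigma(Y\vee Z)=\Sigma Y\vee\Sigma Z$ and using the coordinate projections $p_{Y},p_{Z}$, the H-product
\[
\widetilde\alpha=(\alpha_{Y}\circ\Sigma p_{Y})\cdot(\alpha_{Z}\circ\Sigma p_{Z})
\]
satisfies the required identity, since $p_{Y}\circ i_{Z}=*$ and $p_{Z}\circ i_{Y}=*$ make the cross terms vanish on each wedge summand. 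Correcting $H$ by $\widetilde\alpha$ via loop-theoretic division in the torsor of null-homotopies produces a new null-homotopy of $g\circ q$ whose restriction to $(Y\vee Z)\times I$ is constant in the time coordinate; it therefore descends along $q\times\mathrm{id}_{I}$ to a null-homotopy of $g$, giving $[g]=e$. The main obstacle lies in exactly this step: identifying the obstruction in $[\Sigma(Y\vee Z),X]$ and ensuring that the corrected homotopy is genuinely stationary on $Y\vee Z$ at every time, not merely rel endpoints, so that its quotient is a legitimate homotopy on $Y\wedge Z$.
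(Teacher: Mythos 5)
The paper offers no proof of this lemma: it is quoted directly from Zabrodsky \cite[Lemma 1.3.5]{Zab1976}, so there is no in-paper argument to compare against, and your proposal must be judged on its own. It is essentially the standard Barratt--Puppe cofibration-sequence argument and is correct in outline. The reduction of injectivity of $q^{*}$ to triviality of its kernel is valid for algebraic loop homomorphisms (if $q^{*}[g_{1}]=q^{*}[g_{2}]$, the unique $c$ with $c[g_{1}]=[g_{2}]$ is sent to a solution of $x\cdot q^{*}[g_{1}]=q^{*}[g_{1}]$, hence to $e$); steps (i) and (ii) are routine; and your step (iii) is exactly the observation that surjectivity of $(\Sigma i)^{*}:[\Sigma(Y\times Z),X]\to[\Sigma(Y\vee Z),X]$ forces the connecting map $[\Sigma(Y\vee Z),X]\to[Y\wedge Z,X]$ to be trivial. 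Two points deserve explicit care. First, the set of null-homotopies of $g\circ q$ (up to homotopy rel endpoints) is acted on by $[\Sigma(Y\times Z),X]$ via concatenation, that is, via the suspension co-H-structure, whereas you construct $\widetilde{\alpha}$ and compute $(\Sigma i)^{*}\widetilde{\alpha}$ using the H-structure of $X$; by the Eckmann--Hilton argument the two products on $[\Sigma W,X]$ coincide, so the cross-term computation does transfer to the torsor action, but this compatibility should be stated. Second, after the correction you only know that the restriction of the new homotopy to $(Y\vee Z)\times I$ is homotopic rel endpoints to the constant self-homotopy; one further application of the homotopy extension property for the pair consisting of $(Y\times Z)\times I$ and $(Y\vee Z)\times I\cup(Y\times Z)\times\partial I$ (legitimate under the paper's CW hypotheses) is needed to make it genuinely stationary before passing to the quotient --- precisely the gap you flag yourself. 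With those two remarks supplied, the argument is complete.
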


Since $i^{*}D(f\mu , \mu'(f\times f))=D(f\mu i , \mu'(f\times f)i)=e$, there exists an element $HD(f) \in [X\wedge X,Y]$ such that $q^{*}(HD(f))=D(f\mu , \mu'(f\times f))$.
If $f\simeq f':X\to X'$, then $HD(f)=HD(f')$. Therefore, the H-deviation map
\begin{eqnarray*}
HD : [X,Y] \to [X\wedge X,Y]
\end{eqnarray*}
is defined by sending a class $[f]$ to the class $HD(f)$. Moreover, if $X$ is a homotopy associative and homotopy commutative H-space, then the H-deviation map is an algebraic loop homomorphism ( \cite[Corollary 2.4]{Kac1995}).

\begin{defn}{\rm \cite{Kac1995}
Two central H-extensions
\begin{eqnarray*}
(X,\mu _{X}) \stackrel{f_{i}}{\longrightarrow } (Z_{i},\mu_{i} ) \stackrel{g_{i}}{\longrightarrow }(Y,\mu_{Y}) \ (i=1,2)
\end{eqnarray*}
is said to be} equivalent {\rm
if there exists an H-map $h:(Z_{1},\mu_{1}) \to (Z_{2},\mu_{2})$ such that the following diagram is homotopy commutative:
$$
\begin{CD}
X @>f_{1}>> Z_{1} @>g_{1}>> Y\\
@V=VV @VhVV @V=VV \\
X @>>f_{2}> Z_{2} @>>g_{2}> Y.\\
\end{CD}
$$
}
\end{defn}

It is readily seen that this relation is an equivalence relation. We denote by $CH(X_{1},\mu_{1};X_{2},\mu_{2})$ the set of equivalence classes of central H-extensions of $(X_{1},\mu_{1})$ by $(X_{2},\mu_{2})$, and by $[(X,\mu),f_{1},f_{2}]$ the equivalence class of the central H-extension
\begin{eqnarray*}
(X_{1},\mu_{1}) \stackrel{f_{1}}{\longrightarrow } (X,\mu ) \stackrel{f_{2}}{\longrightarrow }(X_{2},\mu_{2}).
\end{eqnarray*}

\begin{thm}{\rm \cite[Theorem 4.3]{Kac1995}} \label{thm:Kac}
Let $(X_{1},\mu_{1})$ be a homotopy associative and homotopy commutative H-space, and $(X_{2},\mu_{2})$ an H-space. Define 
\begin{eqnarray*}
\Phi : [X_{2}\wedge X_{2},X_{1}]/\Im HD \longrightarrow  CH(X_{1},\mu_{1};X_{2},\mu_{2})
\end{eqnarray*}
by sending $[\omega ]\in [X_{2}\wedge X_{2},X_{1}]/\Im HD$ to $[(X_{1}\times X_{2},\mu_{\omega}),i_{1},p_{2}]\in CH(X_{1},\mu_{1};X_{2},\mu_{2})$,
where the multiplication $\mu_{\omega}$ of $X_{1}\times X_{2}$ is defined by $\mu_{\omega}((x_{1},x_{2}),(y_{1},y_{2}))=(\mu_{1}(\mu_{1}(x_{1},y_{1}),\omega q(x_{2},y_{2})),\mu_{2}(x_{2},y_{2}))$, $q:X_{2}\times X_{2}\to X_{2}\wedge X_{2}$ is the quotient map. Then $\Phi $ is bijective.
\end{thm}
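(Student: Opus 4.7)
The plan is to verify four things in turn: that for any $\omega$ the formula $\mu_\omega$ defines an H-space multiplication giving a central H-extension; that the extension class depends only on $[\omega]$ modulo $\Im HD$; that $\Phi$ is injective; and that $\Phi$ is surjective.

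For the first two points, the key observation is that $\omega q$ vanishes on $X_2\vee X_2$, so $(*,*)$ is a two-sided homotopy unit for $\mu_\omega$ and hence $\mu_\omega$ is an H-multiplication. The inclusion $i_1$ is an H-map (the twist dies when both $X_2$-inputs are the basepoint), $p_2$ is obviously an H-map, and the induced sequence on $[Y,-]$ is short exact. The image of $i_{1*}$ is central in $[Y, X_1 \times X_2]$: direct computation using that the twist depends only on the $X_2$-coordinates reduces centrality to homotopy commutativity of $\mu_1$. For the descent to the quotient, given $f:X_2 \to X_1$, define
\begin{eqnarray*}
h_f(x_1, x_2) = (\mu_1(x_1, f(x_2)), x_2).
\end{eqnarray*}
Expanding the condition that $h_f$ is an H-map from $\mu_\omega$ to $\mu_{\omega + HD(f)}$, and simplifying with homotopy associativity and commutativity of $\mu_1$, reduces it precisely to $q^{*}HD(f) = D(f\mu_2, \mu_1(f\times f))$, which is the defining property of $HD(f)$. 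Hence $h_f$ supplies an equivalence of extensions.

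For injectivity, suppose an H-map $h:(X_1\times X_2, \mu_\omega) \to (X_1\times X_2, \mu_{\omega'})$ realizes an equivalence with $\Phi[\omega]=\Phi[\omega']$. The conditions $h\circ i_1 \simeq i_1$ and $p_2\circ h \simeq p_2$, together with the split short exact sequence on homotopy classes, force $h \simeq h_f$ for a unique $[f]\in [X_2, X_1]$. Running the computation of the previous paragraph in reverse produces an equation on $X_2\times X_2$ whose unique lift through $q^{*}$, provided by Lemma \ref{lem:2.2.2}, is precisely $\omega-\omega'=HD(f)$ in $[X_2\wedge X_2, X_1]$; thus $[\omega]=[\omega']$ in the quotient.

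For surjectivity, given a central H-extension $X_1 \stackrel{f_1}{\to} Z \stackrel{f_2}{\to} X_2$, surjectivity of $f_{2*}$ supplies a homotopy section $s:X_2\to Z$ of $f_2$. The map
\begin{eqnarray*}
\phi(x_1, x_2) = \mu_Z(f_1(x_1), s(x_2))
\end{eqnarray*}
is a homotopy equivalence by applying the five lemma to the induced exact sequences on $[Y,-]$ and invoking Whitehead. Pulling $\mu_Z$ back along $\phi$ yields a twisted multiplication on $X_1\times X_2$, and the main obstacle is to verify that this multiplication has the restricted form $\mu_\omega$: namely, that the twist is concentrated in the $X_1$-factor, depends only on the $X_2$-inputs, and factors through the smash. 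I expect this to use centrality of $\Im f_{1*}$ (to trap cross-terms inside $X_1$), homotopy commutativity of $\mu_1$ (to reorganize them), and Lemma \ref{lem:2.2.2} (to lift the resulting deviation from $[X_2\times X_2, X_1]$ to $[X_2\wedge X_2, X_1]$). The element $\omega$ then satisfies $\Phi[\omega]=[(Z, f_1, f_2)]$.
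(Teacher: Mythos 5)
First, a point of reference: the paper does not prove this statement at all --- Theorem \ref{thm:Kac} is imported verbatim from Kachi \cite[Theorem 4.3]{Kac1995} --- so there is no internal proof to compare yours against. Judged on its own terms, your outline follows the expected route (well-definedness, descent along $HD$, injectivity, surjectivity via a section of $f_{2}$), and the descent computation with $h_{f}(x_{1},x_{2})=(\mu_{1}(x_{1},f(x_{2})),x_{2})$ is correct: expanding the H-map condition and cancelling with homotopy associativity and commutativity of $\mu_{1}$ does reduce it to $q^{*}HD(f)=D(f\mu_{2},\mu_{1}(f\times f))$, which is exactly why the quotient by $\Im HD$ appears.

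There is, however, a genuine gap in your injectivity step. You assert that $hi_{1}\simeq i_{1}$ and $p_{2}h\simeq p_{2}$, ``together with the split short exact sequence,'' force $h\simeq h_{f}$. They do not. Writing $h_{1}=p_{1}h$, Lemma \ref{lem:2.2.2} gives a splitting $[X_{1}\times X_{2},X_{1}]\cong [X_{1},X_{1}]\times [X_{2},X_{1}]\times [X_{1}\wedge X_{2},X_{1}]$ (these are abelian groups since $X_{1}$ is homotopy associative and commutative), and your two conditions only pin down the first factor to $[id_{X_{1}}]$; both $[f]=[h_{1}i_{2}]$ and a component $[\delta]\in [X_{1}\wedge X_{2},X_{1}]$ remain free, so that $[h_{1}]=[p_{1}]\cdot[fp_{2}]\cdot[\delta\bar{q}]$ with $\bar{q}:X_{1}\times X_{2}\to X_{1}\wedge X_{2}$ the quotient map. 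When you then ``run the computation in reverse'' by restricting the H-map condition along $i_{2}\times i_{2}$, the first coordinate of $\mu_{\omega}(i_{2}\times i_{2})$ is $\omega q$ itself, so the $\delta$-component contributes an extra term $[\delta\bar{q}(\omega q,\mu_{2})]$ to the identity $[\omega q][f\mu_{2}]=[\mu_{1}(f\times f)][\omega'q]$ that you need, and the conclusion $[\omega]=[\omega']$ does not follow as stated. You must either use the H-map condition (e.g.\ restricted along $i_{1}\times i_{2}$) to show that $\delta$ is forced to be trivial, or reorganize the argument so it is insensitive to $\delta$. Separately, your surjectivity step is only a plan: you name the right ingredients (centrality of $\Im f_{1*}$, homotopy commutativity of $\mu_{1}$, and Lemma \ref{lem:2.2.2} to lift the deviation to the smash), but the verification that the pulled-back multiplication actually has the restricted form $\mu_{\omega}$ is deferred, and that is the substantive content of the theorem.
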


Thus, for an equivalence class  ${\mathcal E}$ in $CH(X_{1},\mu_{1};X_{2},\mu_{2})$, there exists a map $\omega :X_{2}\wedge X_{2}\to X_{1}$ such that $\Phi [\omega ]=[{\mathcal E}]$, the map $\omega $ is called {\it the classifying map} of the central H-extensions ${\mathcal E}$. Moreover, we refer to the set $[X_{2}\wedge X_{2},X_{1}]/\Im HD$ as {\it the moduli set} of H-extensions of $(X_{1},\mu_{1})$ by $(X_{2},\mu_{2})$.

\section{Moduli subsets of central H-extensions}

We retain the notation and terminology described in the previous section.
Let $(X_{1},\mu_{1})$ be a homotopy associative and homotopy commutative H-space, and $(X_{2},\mu_{2})$ an H-space. Then a central H-extension of $(X_{1},\mu_{1})$ by $(X_{2},\mu_{2})$ is of the form $(X_{1}\times X_{2}, \mu_{\omega})$. Let $i_{j}:X_{j}\to X_{1}\times X_{2}$ and $p_{j}:X_{1}\times X_{2}\to X_{j}$ denote the inclusions and the projections, respectively. Let $\Delta_{j}:X_{j}\to X_{j}\times X_{j}$ and $\Delta:X_{1}\times X_{2}\to (X_{1}\times X_{2})^{2}$ be diagonal maps. We denote by $id_{j}$ and $id$ the identity maps of $X_{j}$ and $X_{1}\times X_{2}$, respectively.

\begin{prop}\label{prop:3.1}
The central H-extension $(X_{1}\times X_{2},\mu_{\omega})$ is inversive if and only if $(X_{2},\mu_{2})$ is inversive and $\omega q(l_{2}\times id_{2})\Delta _{2}\simeq \omega q(id_{2} \times r_{2})\Delta _{2}$, where $l_{2}$ and $r_{2}$ are the left inverse and the right inverse of $(X_{2},\mu_{2})$. 
\end{prop}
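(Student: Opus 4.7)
The plan is to use the proposition already established, which says $(X_1\times X_2,\mu_\omega)$ is inversive if and only if the induced algebraic loop $[Y,X_1\times X_2]$ is inversive for every $Y$, and then carry out the inverse-hunting directly in that homotopy set. Since the sequence $X_1\to X_1\times X_2\to X_2$ is a central H-extension, for classes $(a,b),(a',b')\in [Y,X_1]\times [Y,X_2]$ the twisted product reads
\begin{equation*}
(a,b)\cdot (a',b')=\bigl(a+a'+\omega_{*}(b,b'),\; b\cdot b'\bigr),
\end{equation*}
where $+$ denotes the abelian group structure on $[Y,X_1]$ (recall $X_1$ is homotopy associative and homotopy commutative) and $\omega_{*}(b,b'):=[\omega\circ q\circ (b,b')]$.

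First I would solve for the left and right inverses of an arbitrary element $(a,b)$ in this loop. From the second coordinate one immediately sees that a left inverse must have the form $(a^{\ell},l_Y(b))$ and a right inverse the form $(a^{r},r_Y(b))$, where $l_Y(b),r_Y(b)$ denote the loop-theoretic left and right inverses of $b$ in $[Y,X_2]$. Substituting into the first coordinate and using the abelian group structure of $[Y,X_1]$ yields
\begin{equation*}
a^{\ell}=-a-\omega_{*}(l_Y(b),b),\qquad a^{r}=-a-\omega_{*}(b,r_Y(b)).
\end{equation*}

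Next I would translate inversivity into the equality $a^{\ell}=a^{r}$ and $l_Y(b)=r_Y(b)$ for every $Y$ and every $(a,b)$. Comparing second coordinates gives $l_Y(b)=r_Y(b)$ for all $Y,b$, which by the proposition is exactly inversivity of $(X_2,\mu_2)$; moreover, one then has $l_Y(b)=[l_2\circ b]$ and $r_Y(b)=[r_2\circ b]$ where $l_2,r_2$ are left/right inverses of $X_2$. Cancelling $-a$ in the first coordinate (abelian!) reduces the remaining requirement to
\begin{equation*}
\omega_{*}(l_2\circ b,\, b)=\omega_{*}(b,\, r_2\circ b)\quad\text{in }[Y,X_1]
\end{equation*}
for every $b:Y\to X_2$.

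Finally I would note that this condition is equivalent to $\omega q(l_2\times id_2)\Delta_2\simeq \omega q(id_2\times r_2)\Delta_2$. The "only if" direction is by specialising to $Y=X_2$ and $b=[id_{X_2}]$; the converse follows by precomposing with an arbitrary $b:Y\to X_2$, using naturality of the smash product and the identity $q\circ (b\times b)\circ\Delta_Y = q\circ (b\wedge b)\circ\cdots $, which is a routine diagram chase. The only subtle step is the last naturality argument: one must verify that $\omega_{*}(l_2\circ b,b)=\omega q(l_2\times id_2)\Delta_2\circ b$ as elements of $[Y,X_1]$, and likewise for the right-hand side. This is the main, though standard, bookkeeping obstacle; everything else reduces to an abelian-group cancellation in $[Y,X_1]$.
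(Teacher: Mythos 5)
Your argument is correct and arrives at the same coordinatewise splitting as the paper, but it travels a genuinely different road. The paper works at the level of actual inverse maps $l,r\colon X_1\times X_2\to X_1\times X_2$: the first coordinate is handled by the same cancellation in the abelian group $[X_1\times X_2,X_1]$ that you use, but the second coordinate needs an extra device, namely Lemma \ref{lem:3.3} (that $\Ker i_2^{*}\cap\Im p_2^{*}=\{e\}$), to upgrade $p_2li_2\simeq p_2ri_2$ --- which is all that inversivity of $X_2$ yields directly --- to $p_2l\simeq p_2r$. You sidestep that entirely by invoking the characterization ``$(X,\mu)$ is inversive iff $[Y,X]$ is inversive for every $Y$'' and solving for the left and right inverses of an arbitrary element $(a,b)$ of the twisted loop $[Y,X_1]\times[Y,X_2]$; since the second coordinate of the product $\mu_\omega$ depends only on the second coordinates of the factors, the inverse can be computed coordinate by coordinate, and the comparison of the two inverses is then immediate, with no analogue of Lemma \ref{lem:3.3} required. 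The price is the final naturality bookkeeping, $\omega_{*}(l_2b,b)=b^{*}[\omega q(l_2\times id_2)\Delta_2]$ followed by specialization to $b=id_{X_2}$, which you correctly flag as the only delicate step and which is indeed routine (the paper performs essentially the same translation via its Lemma \ref{lem:3.2} and restriction along $i_2$). Both proofs ultimately rest on the same cancellation in $[Y,X_1]$; yours is cleaner in the second coordinate, while the paper's stays self-contained at the level of maps without appealing to the $[Y,-]$ characterization of inversivity.
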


Before proving Proposition \ref{prop:3.1}, we prepare lemmas.

\begin{lem}\label{lem:3.2}
Let $(X,\mu)$, $(Y,\mu')$ be H-spaces, and let $f:X\to Y$ be a map. If $f$ is an H-map, then $fl\simeq l'f$, $fr\simeq r'f$, where $l,l'$ are left inverses of $(X,\mu),(Y,\mu')$ and $r,r'$ are right inverses of $(X,\mu),(Y,\mu')$ respectively.
\end{lem}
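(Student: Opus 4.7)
The plan is to exploit the fact that inverses in an algebraic loop are unique solutions to well-defined equations. Specifically, the conclusion $fl \simeq l'f$ is equivalent to the equality $[fl] = [l'f]$ in the algebraic loop $([X,Y], \mu')$, where the multiplication is induced from $\mu'$. Since $l'$ is a left inverse of $(Y,\mu')$, by functoriality the class $[l'f]$ satisfies $[l'f]\cdot [f] = [*]$ in $[X,Y]$. Because $[X,Y]$ is an algebraic loop, the equation $y\cdot [f] = [*]$ has a unique solution. Thus it suffices to check that $[fl]$ is also a solution, i.e.\ that $\mu'(fl \times f)\Delta_X \simeq *$.

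The verification is a short chain of homotopies using the H-map hypothesis $f\mu \simeq \mu'(f\times f)$ together with the defining property of $l$:
\begin{eqnarray*}
\mu'(fl\times f)\Delta_X = \mu'(f\times f)(l\times id)\Delta_X \simeq f\mu(l\times id)\Delta_X \simeq f\circ * \simeq *.
\end{eqnarray*}
Combined with the uniqueness argument above, this yields $fl\simeq l'f$. The right-inverse statement $fr\simeq r'f$ is proved by the symmetric computation
\begin{eqnarray*}
\mu'(f\times fr)\Delta_X = \mu'(f\times f)(id\times r)\Delta_X \simeq f\mu(id\times r)\Delta_X \simeq *,
\end{eqnarray*}
so that $[fr]$ and $[r'f]$ both solve $[f]\cdot y = [*]$ in $[X,Y]$ and hence agree.

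There is no serious obstacle here; the argument is essentially a translation of the diagrammatic definitions into statements about the loop $[X,Y]$ and an invocation of the loop axioms. The only point one needs to be slightly careful about is to observe that $[f]$ does have a (unique) left and right inverse in $[X,Y]$ to begin with, which is automatic from the fact that $[X,Y]$ is an algebraic loop in the sense of Definition~2.1 (so that the equations $y\cdot[f]=[*]$ and $[f]\cdot y=[*]$ have unique solutions), not from any inversivity assumption on $X$ or $Y$.
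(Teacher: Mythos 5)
Your proof is correct and follows essentially the same route as the paper's: show that $[fl]$ and $[l'f]$ both solve $y\cdot[f]=[*]$ in the algebraic loop $[X,Y]$ (the former using the H-map hypothesis, the latter by naturality) and invoke uniqueness of solutions. The paper compresses this into one line, so your version simply makes the same argument explicit.
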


\begin{proof}{\rm
We see that $[fl][f]=[*]=[l'f][f]$ in $[X,Y]$ since $f$ is an H-map. Hence, we get $[fl]=[l'f]$. Similarly, one obtains the equality $[fr]=[r'f]$.}
\end{proof}

\begin{lem}\label{lem:3.3}
Let $Y,Z$ be spaces, and let $i:Y\to Y\times Z$, $p:Y\times Z\to Y$ be the inclusion and the projection. Then one has
\begin{eqnarray*}
\Ker \{ i^{*}:[Y\times Z,X]\to [Y,X] \} \cap \Im \{ p^{*}:[Y,X]\to [Y\times Z,X] \} = \{ e \}
\end{eqnarray*}
for any H-space $X$. 
\end{lem}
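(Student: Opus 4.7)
The plan is to exploit the fundamental retraction identity $p\circ i = id_{Y}$, which holds because $i$ sends $y$ to $(y,*)$ and $p$ projects $(y,z)$ to $y$. This identity passes to homotopy sets as $i^{*}\circ p^{*} = id_{[Y,X]}$, making $p^{*}$ a section of $i^{*}$; any element in the image of $p^{*}$ that is killed by $i^{*}$ must therefore come from the zero element.

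Concretely, I would take an arbitrary element $[f]$ lying in $\Ker i^{*}\cap \Im p^{*}$. By the second condition, write $[f]=p^{*}[g]=[g\circ p]$ for some $[g]\in [Y,X]$. Applying $i^{*}$ gives
\begin{eqnarray*}
e \;=\; i^{*}[f] \;=\; [g\circ p\circ i] \;=\; [g\circ id_{Y}] \;=\; [g],
\end{eqnarray*}
so $[g]=e$ in $[Y,X]$. Consequently $[f]=p^{*}(e)=e$ in $[Y\times Z, X]$, and the intersection reduces to the unit element as claimed.

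The argument is essentially formal and relies on no H-space structure beyond the presence of a unit in $[\,\cdot\,,X]$; the H-space hypothesis is used only to give the homotopy sets their algebraic loop structure so that the statement about kernels and unit elements makes sense. There is no real obstacle here — the key observation is simply that $i$ is a section of $p$ (strictly, not just up to homotopy), so the claim is an immediate consequence of functoriality of $[-, X]$. I would present the proof as a brief two-line computation after recording the equality $p\circ i = id_{Y}$.
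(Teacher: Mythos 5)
Your proof is correct and is essentially the same as the paper's: the paper also takes $[f]\in \Ker i^{*}\cap \Im p^{*}$, writes $f\simeq gp$, and uses $pi=id_{Y}$ to conclude $*\simeq fi\simeq g$ and hence $f\simeq *$. No further comment is needed.
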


\begin{proof}{\rm
For any $[f]\in \Ker i^{*}\cap \Im p^{*}$, there is a map $g:Y\rightarrow  X$ such that $f\simeq gq$. Since $[f]$ is in $\Ker i^{*}$, it follows that $*\simeq fi\simeq g$ and hence $f\simeq *$.  
}
\end{proof}

\noindent
{\it Proof of Proposition \ref{prop:3.1}.}
Let $l$ and $r$ be a left inverse and a right inverse of $(X_{1}\times X_{2},\mu_{\omega })$, respectively. Note that $l\simeq r$ if and only if $p_{j}l\simeq p_{j}r$ for $j=1,2$. In order to prove Proposition \ref{prop:3.1}, it is enough to show
\begin{enumerate}
\item $p_{1}l \simeq p_{1}r$ if and only if  $\omega q(l_{2}\times id_{2})\Delta _{2}\simeq \omega q(id_{2} \times r_{2})\Delta _{2}$,
\item $p_{2}l\simeq p_{2}r$ if and only if $(X_{2},\mu_{2})$ is inversive.
\end{enumerate}
Since $* \simeq p_{1}\mu_{\omega }(l\times id) \Delta = \mu_{1}(\mu_{1}(p_{1}l\times p_{1})\Delta\times \omega q(p_{2}l\times p_{2})\Delta)\Delta$, it follows that
\begin{eqnarray*}
[*]=([p_{1}l][p_{1}])[\omega q(p_{2}l\times p_{2})\Delta]
\end{eqnarray*}
in $[X_{1}\times X_{2},X_{1}]$. Similarly, we have 
\begin{eqnarray*}
[*]=([p_{1}][p_{1}r])[\omega q(p_{2}\times p_{2}r)\Delta ] 
\end{eqnarray*}
in $[X_{1}\times X_{2},X_{1}].$ Since $[X_{1}\times X_{2},X_{1}]$ is an abelian group, we see that $[p_{1}l]=[p_{1}r]$ if and only if $[\omega q(p_{2}l\times p_{2})\Delta]=[\omega q(p_{2}\times p_{2}r)\Delta].$ Lemma \ref{lem:3.2} allows us to obtain that
\begin{eqnarray*}
[\omega q(p_{2}l\times p_{2})\Delta]=[\omega q(l_{2}\times id_{2} )\Delta_{2}p_{2}]
\end{eqnarray*}
and
\begin{eqnarray*}
[\omega q(p_{2}\times p_{2}r)\Delta]=[\omega q(id_{2}\times r_{2})\Delta_{2}p_{2}].
\end{eqnarray*}
If $[\omega q(l_{2}\times id_{2} )\Delta_{2}p_{2}]=[\omega q(id_{2}\times r_{2})\Delta_{2}p_{2}]$, then 
$[\omega q(l_{2}\times id_{2})\Delta_{2}] =[\omega q(l_{2}\times id_{2} )\Delta_{2}p_{2}i_{2}]=[\omega q(id_{2}\times r_{2})\Delta_{2}p_{2}i_{2}]= [\omega q(id_{2}\times r_{2})\Delta_{2}]$.
Therefore we have the assertion $(1)$. Suppose that $p_{2}l\simeq p_{2}r$. Then $l_{2}\simeq p_{2}li_{2}\simeq p_{2}ri_{2}\simeq r_{2}$ and $(X_{2},\mu_{2})$ is inversive. Conversely, suppose that $(X_{2},\mu_{2})$ is inversive. Then it follows that $p_{2}li_{2}\simeq p_{2}ri_{2}$ and hence
\begin{eqnarray*}
[*]=[p_{2}li_{2}][p_{2}ri_{2}]^{-1}=[p_{2}li_{2}][r_{2}p_{2}ri_{2}]=i_{2}^{*}([p_{2}l][r_{2}p_{2}r]).
\end{eqnarray*}
Therefore we see that $[p_{2}l][r_{2}p_{2}r]$ is in $\Ker i_{2}^{*}.$ On the other hand, $[p_{2}l][r_{2}p_{2}r]=[l_{2}p_{2}][r_{2}r_{2}p_{2}]=p_{2}^{*}([l_{2}][r_{2}r_{2}])\in \Im p_{2}^{*}.$ In view of Lemma \ref{lem:3.3}, we have
\begin{eqnarray*}
[*] = [p_{2}l][r_{2}p_{2}r] =[p_{2}l][p_{2}r]^{-1}.
\end{eqnarray*}
Hence, it follows that $[p_{2}l]=[p_{2}r]$. We have the second assertion.

\begin{prop}\label{prop:3.4}
The central H-extension $(X_{1}\times X_{2},\mu_{\omega})$ is power associative if and only if $(X_{2},\mu_{2})$ is power associative and $\omega q(\mu_{2}\times id_{2})\overline{\Delta}_{2} \simeq \omega q(id_{2} \times \mu_{2})\overline{\Delta}_{2}$, where $\overline{\Delta}_{2}=(\Delta _{2} \times id_{2})\Delta_{2}$.
\end{prop}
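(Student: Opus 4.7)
The plan is to mirror the proof of Proposition~\ref{prop:3.1}. Two maps into $X_{1}\times X_{2}$ are homotopic iff their compositions with both projections $p_{1}$ and $p_{2}$ are homotopic, so the power associativity of $(X_{1}\times X_{2},\mu_{\omega})$ is equivalent to the conjunction of the $p_{j}$-projected identities $p_{j}\mu_{\omega}(\mu_{\omega}\times id)(\Delta\times id)\Delta \simeq p_{j}\mu_{\omega}(id\times \mu_{\omega})(\Delta\times id)\Delta$ for $j=1,2$. I would treat these two projections separately.

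For $j=2$, observe that $p_{2}\mu_{\omega}=\mu_{2}(p_{2}\times p_{2})$, so both sides project to the two sides of power associativity of $\mu_{2}$ precomposed with the triple diagonal of $X_{1}\times X_{2}$. Since $p_{2}$ admits the section $i_{2}$, these composites are homotopic iff $(X_{2},\mu_{2})$ is itself power associative.

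For $j=1$, I would expand both $p_{1}\mu_{\omega}(\mu_{\omega}\times id)(\Delta\times id)\Delta$ and $p_{1}\mu_{\omega}(id\times \mu_{\omega})(\Delta\times id)\Delta$ using the definition of $\mu_{\omega}$. Each side becomes a $\mu_{1}$-product of three types of factors: a triple $\mu_{1}$-product of $p_{1}$'s (with left versus right bracketing), the inner term $\omega q(p_{2}\times p_{2})\Delta$ produced by the first $\mu_{\omega}$, and an outer term, namely $\omega q(\mu_{2}p_{2}\times p_{2})(\Delta\times id)\Delta$ on the left-hand side and $\omega q(p_{2}\times \mu_{2}p_{2})(id\times \Delta)\Delta$ on the right-hand side. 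Since $X_{1}$ is homotopy associative and homotopy commutative, $[X_{1}\times X_{2},X_{1}]$ is an abelian group; the triple $\mu_{1}$-factors agree by homotopy associativity of $\mu_{1}$, and the two inner $\omega$-factors cancel. Hence the $p_{1}$-projected identity reduces to the equality of the outer $\omega$-terms in $[X_{1}\times X_{2},X_{1}]$. Precomposing with $i_{2}$ yields the stated $\omega$-condition on $X_{2}$; conversely, precomposing that condition with $p_{2}$ restores the equality of the outer $\omega$-terms, exactly as in Proposition~\ref{prop:3.1}.

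The main obstacle is the bookkeeping in the expansion: the double iterate of $\mu_{\omega}$ produces many copies of $p_{1}$ and $p_{2}$ and several layers of $\omega q$, and one must carefully regroup them into the three factors described above before the cancellations become visible. Once that rearrangement is in place, the desired equivalence follows formally from homotopy associativity and commutativity of $\mu_{1}$ together with the precompose-with-$i_{2}$/$p_{2}$ trick already used in Proposition~\ref{prop:3.1}.
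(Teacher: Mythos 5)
Your proposal is correct and follows essentially the same route as the paper: reduce to the two projected identities, handle the $p_{2}$-component via $p_{2}$ being an H-map together with its section $i_{2}$, and handle the $p_{1}$-component by expanding $\mu_{\omega}$ and cancelling the common factors in the abelian group $[X_{1}\times X_{2},X_{1}]$ before applying the $i_{2}$/$p_{2}$ trick. The paper merely declares the $p_{1}$-step ``trivial,'' so your write-up supplies the bookkeeping it omits, but the underlying argument is identical.
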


\begin{proof}{\rm
As in the proof of proposition \ref{prop:3.1}, it is enough to show the following:
\begin{enumerate}
\item $p_{1}\mu_{\omega }(\mu_{\omega}\times id)\overline{\Delta}\simeq p_{1}\mu_{\omega }(id \times\mu_{\omega})\overline{\Delta}$\\
if and only if  $\omega q(\mu_{2}\times id_{2})\overline{\Delta}_{2} \simeq \omega q(id_{2} \times \mu_{2})\overline{\Delta}_{2}$, where $\overline{\Delta}=(\Delta  \times id)\Delta$.
\item $p_{2}\mu_{\omega }(\mu_{\omega}\times id)\overline{\Delta}\simeq p_{2}\mu_{\omega }(id \times\mu_{\omega})\overline{\Delta}$\\
if and only if $(X_{2},\mu_{2})$ is power associative.
\end{enumerate}
The statement $(1)$ is trivial. Since the map $p_{2}$ is an H-map, the assertion $(2)$ follows.
}
\end{proof}

The same argument as in Proposition \ref{prop:3.4} does work to prove the following propositions. The details are left to the reader.

\begin{prop}\label{prop:3.15}
An H-space $(X_{1}\times X_{2},\mu_{\omega})$ is Moufang if and only if $(X_{2},\mu_{2})$ is Moufang and $[\mu_{1}(\omega q\times \omega q)\theta _{2}][\omega q(\mu_{2}\times \mu_{2})\theta _{2}]=[\mu_{1}(\omega q\times \omega q)(id_{2} \times (\mu_{2} \times id_{2} \times id_{2} )\Delta '_{2} )][\omega q (\mu_{2}(id_{2}\times \mu_{2})\times id_{2})\theta _{2}]\in [X_{2}^{3},X_{1}]$, where $\Delta'_{2}$ is the diagonal map of $X_{2}\times X_{2}$ and $\theta _{2}:X_{2}^{3}\longrightarrow X_{2}^{4}$ is defined by $\theta_{2} (x,y,z)=(x,y,z,x)$.
\end{prop}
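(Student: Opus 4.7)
The plan is to mirror the template of Proposition \ref{prop:3.4}, splitting the Moufang identity for $\mu_\omega$ into its $p_1$- and $p_2$-projections. Writing $M_1 = \mu_\omega(\mu_\omega \times \mu_\omega)\theta$ and $M_2 = \mu_\omega(\mu_\omega \times id)(id \times \mu_\omega \times id)\theta$ as maps $(X_1\times X_2)^3 \to X_1\times X_2$, the homotopy $M_1 \simeq M_2$ holds if and only if $p_j M_1 \simeq p_j M_2$ for $j=1,2$, so it suffices to analyze each projection separately.

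For the $p_2$-part: since $p_2\colon (X_1\times X_2,\mu_\omega) \to (X_2,\mu_2)$ is an H-map, composing $p_2$ past each occurrence of $\mu_\omega$ shows that $p_2 M_i \simeq (\text{Moufang word in }\mu_2)\circ (p_2)^3$. Pulling back along the section $(i_2)^3\colon X_2^3 \hookrightarrow (X_1\times X_2)^3$ of $(p_2)^3$ then shows that $p_2 M_1 \simeq p_2 M_2$ is equivalent to $(X_2,\mu_2)$ being Moufang.

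For the $p_1$-part: expand $\mu_\omega$ via its defining formula and use homotopy commutativity and associativity of $\mu_1$ to rearrange $p_1 M_1$ and $p_1 M_2$ in the abelian group $[(X_1\times X_2)^3, X_1]$. Each side splits (in this abelian group) as a product of a ``pure $X_1$'' contribution, built from the first coordinates $a_1,b_1,c_1$ by iterated $\mu_1$, and a product of three $\omega q$-terms built from the $X_2$-coordinates. A direct inspection shows the pure $X_1$ contributions are identical on both sides (both collapse to $a_1^2 b_1 c_1$ in the abelian group, since $\mu_1$ is homotopy abelian), so they cancel. The remaining $\omega q$-terms on each side are pulled back from $X_2^3$ along $(p_2)^3$, and applying the section $(i_2)^3$ converts the equality of those pullbacks into an equality in $[X_2^3, X_1]$ that, after the expansion is carried out, is exactly the stated identity.

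The main obstacle is the $p_1$-computation: one must carefully expand the nested applications of $\mu_\omega$ and track how the inner $\omega q$-values reappear as inputs to the outer $\omega q$'s, producing the composite terms $\omega q(\mu_2\times \mu_2)\theta_2$ and $\omega q(\mu_2(id_2\times\mu_2)\times id_2)\theta_2$ that make up the second factors in the final identity, and to verify that the first factors are $\mu_1(\omega q\times\omega q)\theta_2$ and $\mu_1(\omega q\times\omega q)(id_2\times(\mu_2\times id_2\times id_2)\Delta_2')$ respectively. Once the expansion is done, the cancellation of the $X_1$-only part and the descent from $(X_1\times X_2)^3$ to $X_2^3$ via the section $(i_2)^3$ are routine, which is why the author leaves the details to the reader.
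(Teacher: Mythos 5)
Your proposal is correct and is exactly the argument the paper intends: the paper gives no separate proof of Proposition \ref{prop:3.15}, stating only that ``the same argument as in Proposition \ref{prop:3.4} does work,'' and your proof carries out precisely that template — splitting $M_1\simeq M_2$ into the $p_2$-component (equivalent to $(X_2,\mu_2)$ being Moufang since $p_2$ is an H-map, using the section $(i_2)^3$) and the $p_1$-component (expanding $\mu_\omega$, cancelling the pure $X_1$-contributions, which agree by homotopy associativity and commutativity of $\mu_1$, and descending along $(i_2)^3$ to the stated identity in $[X_2^3,X_1]$). The expansion you describe does yield the four composites appearing in the statement, so nothing is missing.
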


\begin{prop}\label{prop:3.17}
$(X_{1}\times X_{2},\mu_{\omega})$ is symmetrically associative if and only if $(X_{2},\mu_{2})$ is symmetrically associative and $\mu_{1}(\omega q\times \omega q)(t \times (id_{2}\times \mu_{2}t)(\Delta _{2}\times id_{2}))\Delta'_{2}\simeq \mu_{1}(\omega q\times \omega q)(id_{2}\times id_{2} \times (\mu_{2}\times id_{2})(id_{2}\times t)(\Delta_{2}\times id_{2}))\Delta'_{2}.$
\end{prop}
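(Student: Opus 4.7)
The plan is to mirror the component-wise approach of Propositions \ref{prop:3.1} and \ref{prop:3.4}. Writing $F := \mu_{\omega}(\mu_{\omega} \times id)(id \times t)(\Delta \times id)$ and $G := \mu_{\omega}(id \times \mu_{\omega})(id \times t)(\Delta \times id)$ for the two composites in the definition of symmetric associativity, $(X_{1}\times X_{2},\mu_{\omega})$ is symmetrically associative if and only if $p_{j}F \simeq p_{j}G$ for $j=1,2$, so I would treat these two projections separately.

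For $j=2$, the projection $p_{2}$ is an H-map from $(X_{1}\times X_{2},\mu_{\omega})$ to $(X_{2},\mu_{2})$, hence $p_{2}F$ and $p_{2}G$ are exactly the two sides of symmetric associativity for $(X_{2},\mu_{2})$ precomposed with $p_{2}\times p_{2}$. Since $p_{2}\times p_{2}$ admits the section $i_{2}\times i_{2}$, the condition $p_{2}F \simeq p_{2}G$ is equivalent to $(X_{2},\mu_{2})$ being symmetrically associative.

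For $j=1$, I would unwind $\mu_{\omega}((x_{1},x_{2}),(y_{1},y_{2})) = (\mu_{1}(\mu_{1}(x_{1},y_{1}),\omega q(x_{2},y_{2})),\mu_{2}(x_{2},y_{2}))$ applied twice and find that $p_{1}F$ is a $\mu_{1}$-product of the purely $X_{1}$-term $\mu_{1}(\mu_{1}(x_{1},y_{1}),x_{1})$ with two $\omega q$-contributions, and likewise $p_{1}G$ is a $\mu_{1}$-product of $\mu_{1}(x_{1},\mu_{1}(y_{1},x_{1}))$ with two $\omega q$-contributions. Homotopy associativity of $\mu_{1}$ cancels the purely $X_{1}$-parts, and since $[(X_{1}\times X_{2})^{2}, X_{1}]$ is an abelian group (as $X_{1}$ is homotopy associative and commutative), I can collect the four $\omega q$-factors into the two sides of the displayed equation precomposed with $p_{2}\times p_{2}$. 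Using the section $i_{2}\times i_{2}$ once more strips off $p_{2}\times p_{2}$, yielding precisely the equation in the statement; conversely, precomposing the stated equation with $p_{2}\times p_{2}$ retrieves $p_{1}F \simeq p_{1}G$.

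The main obstacle is purely bookkeeping: the composite $(id_{2}\times \mu_{2}t)(\Delta_{2}\times id_{2})$ on one side and $(\mu_{2}\times id_{2})(id_{2}\times t)(\Delta_{2}\times id_{2})$ on the other encode exactly which element among $x_{2}, y_{2}, \mu_{2}(x_{2},y_{2}), \mu_{2}(y_{2},x_{2})$ enters each of the four copies of $\omega q$, and matching this against the direct expansion of $p_{1}F$ and $p_{1}G$ requires careful tracking of coordinates through $\Delta'_{2}$. Once the expressions line up, the proof proceeds exactly in the spirit indicated after Proposition \ref{prop:3.4}, with Lemma \ref{lem:3.3} available as a fallback in place of the section argument if needed.
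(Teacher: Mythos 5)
Your proposal is correct and follows the same route the paper indicates: the paper proves Proposition \ref{prop:3.17} by declaring it "the same argument as in Proposition \ref{prop:3.4}," namely splitting into the $p_{1}$- and $p_{2}$-components, using that $p_{2}$ is an H-map for the second, and expanding $\mu_{\omega}$ with the abelian group structure of $[\,\cdot\,,X_{1}]$ and restriction along $i_{2}$ (as in Proposition \ref{prop:3.1}) for the first. You have simply filled in the details the paper leaves to the reader, and your coordinate bookkeeping for the two $\Gamma_{\rm s.a}$-type composites matches the statement.
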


Next, we consider the following subsets of the homotopy set $[X_{2}\wedge X_{2},X_{1}]$:
\begin{align*}
G_{{\rm inv}} &= \{ [\omega] \in [X_{2}\wedge X_{2},X_{1}] \ \mid \  \omega q(l_{2}\times id_{2})\Delta _{2}\simeq \omega q(id_{2} \times r_{2})\Delta _{2} \}, \\
G_{{\rm p.a}} &= \{ [\omega] \in [X_{2}\wedge X_{2},X_{1}] \ \mid \  \omega q(\mu_{2}\times id_{2})\overline{\Delta}_{2} \simeq \omega q(id_{2} \times \mu_{2})\overline{\Delta}_{2} \}, \\
G_{{\rm Mo}} &= \{ [\omega] \in [X_{2}\wedge X_{2},X_{1}] \ \mid \ \Gamma  _{{\rm Mo}}(\omega )\simeq \Gamma '_{{\rm 
Mo}}(\omega ) \} \ \text{and} \\
G_{{\rm s.a}} &= \{ [\omega] \in [X_{2}\wedge X_{2},X_{1}] \ \mid \ \mu _{1}(\omega \times \omega )\Gamma  _{{\rm s.a}}\simeq  \mu _{1}(\omega \times \omega )\Gamma ' _{{\rm s.a}}\},
\end{align*}
where
\begin{align*}
&\Gamma  _{{\rm Mo}}(\omega )=\mu_{1}(\mu_{1}(\omega q\times \omega q)\theta _{2}\times \omega q(\mu_{2}\times \mu_{2})\theta _{2})\Delta_{X_{2}^{3}},\\
&\Gamma '_{{\rm Mo}}(\omega )=\mu_{1}(\mu_{1}(\omega q\times \omega q)(id_{2} \times (\mu_{2} \times id_{X_{2}^{2}} )\Delta 
'_{2} )\times \omega q (\mu_{2}(id_{2}\times \mu_{2})\times id_{2})\theta _{2})\Delta_{X_{2}^{3}}, \\
&\Gamma  _{{\rm s.a}}=(q\times q)(t \times (id_{2} \times \mu_{2}t)(\Delta _{2}\times id_{2}))\Delta'_{2}\ \text{and}\\
&\Gamma ' _{{\rm s.a}}=(q\times q)(id_{2}\times id_{2} \times (\mu_{2}\times id_{2})(id_{2}\times t)(\Delta_{2}\times id))\Delta'_{2}.
\end{align*}

\begin{lem}
The sets $G_{{\rm inv}}$, $G_{{\rm p.a}}$, $G_{{\rm Mo}}$ and $G_{{\rm s.a}}$ are subgroups of $[X_{2}\wedge X_{2},X_{1}]$.
\end{lem}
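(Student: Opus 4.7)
The plan is to realize each of $G_{\rm inv}, G_{\rm p.a}, G_{\rm Mo}, G_{\rm s.a}$ as the kernel of a homomorphism of abelian groups; subgroup-hood is then automatic. The key enabling fact is that since $(X_1,\mu_1)$ is homotopy associative and homotopy commutative, the functor $[-,X_1]$ takes values in abelian groups, and for every map $\phi:V\to W$ the precomposition $\phi^*:[W,X_1]\to[V,X_1]$ is a group homomorphism. Consequently every finite $\mu_1$-product of such precompositions is again a homomorphism.

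For $G_{\rm inv}$, I set $\alpha = q(l_2\times id_2)\Delta_2$ and $\beta = q(id_2\times r_2)\Delta_2$, both maps $X_2\to X_2\wedge X_2$. The defining relation reads $\alpha^*[\omega]=\beta^*[\omega]$, so $G_{\rm inv}=\Ker(\alpha^*-\beta^*)$, a subgroup of $[X_2\wedge X_2,X_1]$. The same recipe gives $G_{\rm p.a}=\Ker(\alpha'{}^*-\beta'{}^*)$ with $\alpha'=q(\mu_2\times id_2)\overline{\Delta}_2$ and $\beta'=q(id_2\times\mu_2)\overline{\Delta}_2$.

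For $G_{\rm Mo}$ and $G_{\rm s.a}$ I first rewrite each compound expression $\Gamma_{\bullet}(\omega)$ as a sum of precompositions $\phi_i^*[\omega]$. Using the defining identity $[f]+[g]=[\mu_1(f\times g)\Delta]$ in $[X_2^k,X_1]$, the term $\mu_1(\omega q\times\omega q)\theta_2$ decomposes as $[\omega q\pi_{12}]+[\omega q\pi_{31}]$, where $\pi_{12},\pi_{31}:X_2^3\to X_2^2$ are the coordinate projections read off from $\theta_2(x,y,z)=(x,y,z,x)$. Iterating this manoeuvre, both $\Gamma_{\rm Mo}$ and $\Gamma'_{\rm Mo}$ become homomorphisms $[X_2\wedge X_2,X_1]\to[X_2^3,X_1]$, whence $G_{\rm Mo}=\Ker(\Gamma_{\rm Mo}-\Gamma'_{\rm Mo})$. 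For $G_{\rm s.a}$, writing $\Gamma_{\rm s.a}=(\gamma_1,\gamma_2)$ and $\Gamma'_{\rm s.a}=(\gamma'_1,\gamma'_2)$ with each $\gamma_i,\gamma'_i:X_2^2\to X_2\wedge X_2$, the condition unwinds to $(\gamma_1^*+\gamma_2^*)[\omega]=(\gamma'_1{}^*+\gamma'_2{}^*)[\omega]$, again identifying $G_{\rm s.a}$ as such a kernel.

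The main obstacle is purely organisational: unravelling $\theta_2$, $\Delta'_2$ and the iterated diagonals appearing in $\Gamma_{\bullet}$ so as to exhibit each side as an explicit finite sum of precomposition homomorphisms, and keeping track that every $\omega$ sits on the outside of the resulting maps (so that only $\phi_i^*$ and not some bilinear operation in $\omega$ appears). Beyond this bookkeeping the argument requires no input other than the homotopy commutativity of $\mu_1$ and the defining formula $[f]+[g]=[\mu_1(f\times g)\Delta]$ for the group operation.
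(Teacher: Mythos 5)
Your proposal is correct and is essentially the paper's argument repackaged: the paper proves the $G_{\rm inv}$ case by checking closure under product and inverse directly, using exactly the additivity of precomposition with respect to the $\mu_{1}$-induced group structure that you invoke, and then says ``similarly'' for the other three sets. Your formulation of each set as $\Ker$ of a difference of homomorphisms, together with the explicit decomposition of $\Gamma_{{\rm Mo}}$ and $\Gamma_{{\rm s.a}}$ into finite sums of precompositions $\phi_{i}^{*}$, supplies precisely the bookkeeping the paper leaves to the reader, so no further comparison is needed.
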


\begin{proof}{\rm
For any $[\omega_{1}],[\omega_{2}]\in G_{{\rm inv}}$, we have
\begin{align*}
[\mu_{1}(\omega_{1}\times \omega_{2})\Delta''_{2} q(l_{2}\times id_{2})\Delta _{2}] 
                      &= [\omega_{1}q(l_{2}\times id_{2})\Delta_{2}][\omega_{2}q(l_{2}\times id_{2})\Delta_{2}] \\
                      &= [\omega_{1}q(id_{2}\times r_{2})\Delta_{2}][\omega_{2}q(id_{2}\times r_{2})\Delta_{2}] \\
                      &= [\mu_{1}(\omega_{1}\times \omega_{2})\Delta''_{2} q(id_{2}\times r_{2})\Delta _{2}]
\end{align*}
where $\Delta''_{2}$ is the diagonal map of $X_{2}\wedge X_{2}$. Hence $[\omega_{1}][\omega_{2}]\in G_{{\rm inv}}$. Let $l_{1}$ be a left inverse of $(X_{1},\mu_{1})$. Then we have
\begin{eqnarray*}
[ l_{1}\omega_{1}q(l_{2}\times id_{2})\Delta _{2}] = [l_{1}\omega_{1}q(id_{2}\times r_{2})\Delta _{2}].
\end{eqnarray*}
Hence we obtain $[\omega_{1}]^{-1}=[l_{1}\omega_{1}]\in G_{{\rm inv}}$. Similarly, we can show that $G_{{\rm p.a}}$, $G_{{\rm Mo}}$ and $G_{{\rm s.a}}$ are subgroups of $[X_{2}\wedge X_{2},X_{1}]$.}
\end{proof}

Let $\Phi : [X_{2}\wedge X_{2},X_{1}]/\Im HD \to CH(X_{1},\mu_{1};X_{2},\mu_{2})$ be the bijection mentioned in Theorem \ref{thm:Kac}.

\begin{lem}\label{lem:4.2}
If $(X_{2},\mu _2)$ is inversive $($respectively, power associative, Moufang and symmetrically associative$)$, then $\Im HD \subset G_{{\rm inv}}$ $($respectively, $\Im HD \subset G_{{\rm p.a}}$, $\Im HD \subset G_{{\rm Mou.}}$ and $\Im HD \subset G_{{\rm s.a}})$.
\end{lem}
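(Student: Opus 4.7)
The plan. Every class $[\omega]\in \Im HD$ is represented by $\omega=HD(f)$ for some map $f\colon X_2\to X_1$. The defining property of the H-deviation, together with the fact that $X_1$ is homotopy commutative (so $[Z,X_1]$ is an abelian group for every $Z$), yields the key identity
$$[\omega q]\,=\,[f\mu_2]-[\mu_1(f\times f)] \quad\text{in } [X_2\times X_2,X_1],$$
written in additive notation. The strategy is to substitute this formula into the defining equation of each of the four subgroups $G_{\rm inv}$, $G_{\rm p.a}$, $G_{\rm Mo}$, $G_{\rm s.a}$, and to verify the resulting equation by splitting it into an \emph{$f\mu_2$-part} and a \emph{$\mu_1(f\times f)$-part}.

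For every property, the $\mu_1(f\times f)$-part of both sides expands, via the abelian group structure, into a sum of terms of the form $[f\circ\alpha]$ with $\alpha\colon X_2^n\to X_2$, and these sums coincide by elementary bookkeeping (plus the hypothesis $l_2\simeq r_2$ in the inversive case): e.g.\ $[fl_2]+[f]$ versus $[f]+[fr_2]$ in the inversive case, and $[f\mu_2\Delta_2]+[f]$ versus $[f]+[f\mu_2\Delta_2]$ in the power-associative case. Meanwhile, the $f\mu_2$-parts take the shape $[f\mu_2\circ A]$ and $[f\mu_2\circ B]$, where $A\simeq B$ is exactly the assumed property of $(X_2,\mu_2)$: namely $\mu_2(l_2\times id_2)\Delta_2\simeq *\simeq \mu_2(id_2\times r_2)\Delta_2$ for inversivity; $\mu_2(\mu_2\times id_2)\overline\Delta_2\simeq \mu_2(id_2\times\mu_2)\overline\Delta_2$ for power associativity; the Moufang identity $\mu_2(\mu_2\times\mu_2)\theta_2\simeq \mu_2(\mu_2\times id_2)(id_2\times\mu_2\times id_2)\theta_2$; and the analogous identity for symmetric associativity. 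Postcomposing each with $f$ transports it into $[X_2^n,X_1]$ and matches the two $f\mu_2$-parts.

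Combining the two observations gives $[\omega]\in G_\bullet$ in all four cases. The main obstacle is purely notational in the Moufang case, where each side of the defining equation is a sum of three $\omega q$-terms composed with different maps $X_2^3\to X_2\times X_2$; after expansion the linear-in-$f$ contributions on both sides total $-2[f\pi_1]-[f\pi_2]-[f\pi_3]$ (with the projections $\pi_i\colon X_2^3\to X_2$) and hence cancel, while the $f\mu_2$-contributions collapse precisely to $[f\mu_2(\mu_2\times\mu_2)\theta_2]$ and $[f\mu_2(\mu_2\times id_2)(id_2\times\mu_2\times id_2)\theta_2]$, which are identified by the Moufang property of $X_2$. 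The symmetric-associative case is analogous but shorter, completing the four inclusions.
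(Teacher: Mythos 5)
Your argument is correct, but it takes a genuinely different route from the paper's. The paper's proof is essentially a two-line appeal to machinery already in place: if $[\omega]\in\Im HD$ then $\Phi([\omega])$ is the class of the trivial extension $(X_{1}\times X_{2},\mu_{1}\times\mu_{2})$, which is inversive (resp.\ power associative, Moufang, symmetrically associative) because $X_{2}$ is and $X_{1}$, being homotopy associative and commutative, automatically is; the property passes across the equivalence of extensions to $(X_{1}\times X_{2},\mu_{\omega})$, and the ``only if'' direction of Proposition \ref{prop:3.1} (resp.\ \ref{prop:3.4}, \ref{prop:3.15}, \ref{prop:3.17}) then converts this into the defining condition of the subgroup. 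You instead verify the condition directly: writing $[\omega q]=[f\mu_{2}]-[\mu_{1}(f\times f)]$ in the abelian group $[X_{2}\times X_{2},X_{1}]$ and precomposing with the relevant maps $X_{2}^{n}\to X_{2}\times X_{2}$, the contributions coming from $\mu_{1}(f\times f)$ cancel between the two sides by commutativity (I checked your bookkeeping, including the total $-2[f\pi_{1}]-[f\pi_{2}]-[f\pi_{3}]$ on each side in the Moufang case), while the $f\mu_{2}$-contributions are identified by postcomposing the assumed identity on $X_{2}$ with $f$ (plus $l_{2}\simeq r_{2}$ in the inversive case, as you note). What the paper's route buys is brevity, since Theorem \ref{thm:Kac} and the characterization propositions are already established; what your route buys is independence from those propositions and from the implicit fact that the four loop-theoretic properties are invariant under H-equivalence, and it makes visible exactly where each hypothesis on $(X_{2},\mu_{2})$ enters. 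Both proofs are valid.
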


\begin{proof}{\rm
For any $[\omega]\in \Im HD$, $\Phi ([\omega])=[(X_{1}\times X_{2},\mu_{1}\times \mu_{2}),i_{1},p_{2}]$. Since $(X_{2},\mu_{2})$ is inversive, $(X_{1}\times X_{2},\mu_{1}\times \mu_{2})$ is also inversive and $\omega q(l_{2}\times id_{2})\Delta _{2}\simeq \omega q(id_{2} \times r_{2})\Delta _{2}$ by Proposition \ref{prop:3.1}. Hence $[\omega]\in G_{{\rm inv}}$. A similar augment shows the other cases.
}
\end{proof}

\begin{thm}\label{prop:4.1.3} The following statements hold.
\ \\[-1.2em]
\begin{enumerate}
\item If $(X_{2},\mu _2)$ is inversive, then $\Phi _{{\rm inv}}:G_{{\rm inv}}/\Im HD\to \{ [(X,\mu),f_{1},f_{2}]\in  CH(X_{1},\mu_{1};X_{2},\mu_{2}) \mid (X,\mu)$ is inversive  $\}$, which is the restricted homomorphism of $\Phi $ to $G_{{\rm inv}}/\Im HD $, is bijective.
\item If $(X_{2},\mu _2)$ is power associative, then $\Phi _{{\rm p.a}}:G_{{\rm p.a}}/\Im HD\to \{ [(X,\mu),f_{1},f_{2}]\in  CH(X_{1},\mu_{1};X_{2},\mu_{2}) \mid (X,\mu)$ is power associative  $\}$, which is the restricted homomorphism of $\Phi $ to $G_{{\rm p.a}}/\Im HD $, is bijective.
\item If $(X_{2},\mu _2)$ is Moufang, then $\Phi _{{\rm Mo}}:G_{{\rm Mo}}/\Im HD\to \{ [(X,\mu),f_{1},f_{2}]\in  CH(X_{1},\mu_{1};X_{2},\mu_{2}) \mid (X,\mu)$ is Moufang $\}$, which is the restricted homomorphism of $\Phi $ to $G_{{\rm Mo}}/\Im HD $, is bijective.
\item If $(X_{2},\mu _2)$ is symmetrically associative, then $\Phi _{{\rm s.a}}:G_{{\rm s.a}}/\Im HD\to \{ [(X,\mu),f_{1},f_{2}]\in  CH(X_{1},\mu_{1};X_{2},\mu_{2}) \mid (X,\mu)$ is symmetrically associative  $\}$, which is the restricted homomorphism of $\Phi $ to $G_{{\rm s.a}}/\Im HD $, is bijective.
\end{enumerate}
\end{thm}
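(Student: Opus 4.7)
The plan is to treat all four cases uniformly, since for each property $P\in\{\mathrm{inv},\mathrm{p.a},\mathrm{Mo},\mathrm{s.a}\}$ the set $G_P$ is defined precisely by the extra condition on $\omega$ appearing in the ``iff'' characterizations of Propositions~\ref{prop:3.1}, \ref{prop:3.4}, \ref{prop:3.15} and~\ref{prop:3.17}. In each case I would establish, in order, well-definedness of $\Phi_P$, its injectivity, and then its surjectivity.

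First I would verify that $\Phi_P$ is well-defined on $G_P/\Im HD$: since $(X_2,\mu_2)$ has property $P$ by hypothesis, Lemma~\ref{lem:4.2} gives $\Im HD\subset G_P$, so the quotient makes sense, and because $G_P$ is a subgroup of the abelian group $[X_2\wedge X_2,X_1]$, membership of a class in $G_P/\Im HD$ is independent of the chosen representative. For any $[\omega]\in G_P$ the ``if'' direction of the corresponding proposition guarantees that $(X_1\times X_2,\mu_\omega)$ enjoys property $P$, so $\Phi([\omega])$ lands in the prescribed target subset. Injectivity of $\Phi_P$ then follows immediately from bijectivity of $\Phi$ in Theorem~\ref{thm:Kac}.

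For surjectivity, given $[(X,\mu),f_1,f_2]$ in the target subset, Theorem~\ref{thm:Kac} produces $[\omega]$ with $\Phi([\omega])=[(X_1\times X_2,\mu_\omega),i_1,p_2]$ equivalent to $[(X,\mu),f_1,f_2]$, and the task is to show $[\omega]\in G_P/\Im HD$. By the ``only if'' direction of the relevant proposition, it suffices to prove that the twisted product $(X_1\times X_2,\mu_\omega)$ itself enjoys property $P$, that is, that property $P$ is invariant under equivalence of central H-extensions.

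This invariance is the main obstacle. An equivalence supplies an H-map $h:(X,\mu)\to(X_1\times X_2,\mu_\omega)$ fitting into a diagram with identities on the ends, and the two central H-extensions yield short exact sequences of algebraic loops as in Definition~\ref{H-ex}. I would apply the five lemma to these sequences; because the image of $f_{1*}$ lies in the center, the usual diagram chase remains valid in the nonassociative loop setting and gives that $h_*:[Y,X]\to[Y,X_1\times X_2]$ is a loop isomorphism for every $Y$. Each property in Definition~\ref{def:2.1.5} is a purely loop-theoretic condition on the homotopy set $[Y,-]$, hence transfers across $h_*$, completing the argument.
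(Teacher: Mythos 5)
Your proposal is correct and follows essentially the same route as the paper: well-definedness and injectivity come from the ``if'' directions of Propositions~\ref{prop:3.1}--\ref{prop:3.17} together with the bijectivity of $\Phi$, and surjectivity comes from pulling an arbitrary extension back to some $(X_1\times X_2,\mu_\omega)$ via Theorem~\ref{thm:Kac} and invoking the ``only if'' directions. The only difference is that the paper simply asserts the key invariance step (``Since $(X,\mu)$ is inversive, $(X_1\times X_2,\mu_\omega)$ is also inversive'') whereas you justify it with a five-lemma argument for the short exact sequences of algebraic loops; that argument is valid and usefully fills in a detail the paper leaves implicit.
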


\begin{proof}{\rm
By Proposition \ref{prop:3.1}, we see that the map $\Phi_{{\rm inv}}$ is a well-defined homomorphism and hence it is injective. Let $(X,\mu)$ be an inversive central H-extension of $(X_{1},\mu_{1})$ by $(X_{2},\mu_{2})$. Theorem \ref{thm:Kac} yields that there exists a map $\omega :X_{2}\wedge X_{2}\to X_{1}$ such that $[(X,\mu),f_{1},f_{2}]=[(X_{1}\times X_{2},\mu_{\omega}),i_{1},p_{2}]$. Since $(X,\mu)$ is inversive, $(X_{1}\times X_{2},\mu_{\omega})$ is also inversive. Hence, $\omega q(l_{2}\times id_{2})\Delta _{2}\simeq \omega q(id_{2} \times r_{2})\Delta _{2}$ by Proposition \ref{prop:3.1}. Therefore $\Phi _{{\rm inv}}([\omega]) = [(X,\mu),f_{1},f_{2}]$ and $\Phi_{{\rm inv}}$ is surjective. In the same view, we can show that $\Phi _{{\rm p.a}}$, $\Phi _{{\rm Mo}}$ and $\Phi _{{\rm s.a}}$ are bijective. 
}
\end{proof}

We denote $G_{{\rm inv}}/\Im HD$, $G_{{\rm p.a}}/\Im HD$, $G_{{\rm Mo}}/\Im HD$ and $G_{{\rm s.a}}/\Im HD$ by $S_{{\rm inv}}$, $S_{{\rm p.a}}$, $S_{{\rm Mo}}$ and $S_{{\rm s.a}}$, respectively, and call {\it the moduli subspace} of the H-extensions associated with the corresponding properties.

\section{The moduli spaces of central H-extensions of rational H-spaces}

In this section, we will investigate the moduli set $G/\Im HD$ and its subsets $S_{{\rm inv}}$, $S_{{\rm p.a}}$, $S_{{\rm Mo}}$ and $S_{{\rm s.a}}$ in the rational cases. If $(X,\mu)$ is a $\Q$-local, simply-connected H-space and the homotopy group $\pi _{*}(X)$ are of finite type, then $H^{*}(X;\Q)\cong \Lambda (x_{1},x_{2},\cdots )$ as an algebra, the free commutative algebra with basis $x_{1},x_{2},\cdots $ by Hopf's Theorem \cite[p.286]{Spa1966}.

\begin{thm}{\rm \cite[Proposition 1]{Sch1984}} \label{thm:Sch}
Let $(X,\mu)$ be a $\Q$-local, simply-connected H-space and $\pi _{*}(X)$ are of finite type, let $Y$ be a space. Then the canonical map
\begin{eqnarray*}
[Y,X]\longrightarrow \Hom_{{\rm Alg}}(H^{*}(X;\Q ),H^{*}(Y;\Q )), \ f\longmapsto H^{*}(f)
\end{eqnarray*}
is bijective.
\end{thm}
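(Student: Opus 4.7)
\textbf{Proof proposal for Theorem \ref{thm:Sch}.}
The plan is to reduce to the case of products of Eilenberg--MacLane spaces and verify the bijection factor by factor. First, by Hopf's theorem applied to the homotopy associative multiplication on $X$, we have $H^*(X;\Q)\cong \Lambda(x_1,x_2,\ldots)$ as graded commutative algebras, with the generators $x_i$ of degrees $n_i$ forming a basis of the indecomposables. The finite type hypothesis on $\pi_*(X)$ ensures that only finitely many of the $n_i$ equal any given value.

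Next, each class $x_i\in H^{n_i}(X;\Q)$ is represented by a map $\tilde{x}_i:X\to K(\Q,n_i)$, and assembling these via the universal property of the product yields a map $\phi:X\to \prod_i K(\Q,n_i)$ whose pullback sends the fundamental class $\iota_{n_i}$ to $x_i$. Both source and target have rational cohomology freely generated as algebras on the same indecomposables in the same degrees, so $\phi^*$ is an isomorphism; since both spaces are $\Q$-local and simply-connected CW-complexes, the rational Whitehead theorem identifies $\phi$ as a homotopy equivalence.

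It therefore suffices to verify the theorem for $X=\prod_i K(\Q,n_i)$. In that case one has natural bijections
\begin{eqnarray*}
[Y,\prod_i K(\Q,n_i)]\;\cong\;\prod_i [Y,K(\Q,n_i)]\;\cong\;\prod_i H^{n_i}(Y;\Q),
\end{eqnarray*}
the last coming from the representability of rational cohomology. On the algebraic side, a graded algebra homomorphism out of a free commutative algebra is determined by, and free to prescribe, the images of its generators, which yields
\begin{eqnarray*}
\Hom_{{\rm Alg}}(\Lambda(x_1,x_2,\ldots),H^*(Y;\Q))\;\cong\;\prod_i H^{n_i}(Y;\Q).
\end{eqnarray*}
A direct diagram chase shows that $f\mapsto H^*(f)$ intertwines these two identifications, so the canonical map is bijective.

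The step I expect to be the main obstacle is the splitting used in the second paragraph, namely that a simply-connected $\Q$-local H-space of finite type is equivalent to a (possibly infinite) product of rational Eilenberg--MacLane spaces. The construction of $\phi$ from the generators $x_i$ is the most transparent route, but when $X$ has rational homotopy in infinitely many degrees one must either argue degree by degree on a Postnikov tower, or invoke the minimal Sullivan model: Hopf's theorem forces the minimal model of $X$ to take the form $(\Lambda V,0)$, and such formal models correspond precisely to products of $K(\Q,n)$'s. Once this splitting is in hand, the remaining steps are formal manipulations with representable functors and free commutative algebras.
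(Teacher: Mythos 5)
The paper does not actually prove Theorem \ref{thm:Sch}: it is imported from Scheerer \cite{Sch1984} as Proposition 1 and used as a black box, so there is no internal argument to compare yours against. Your proof is the standard one and is essentially correct: split $X$ rationally as a product of Eilenberg--MacLane spaces and check the bijection factor by factor. Two small points are worth tightening. First, you invoke ``the homotopy associative multiplication on $X$'' to apply Hopf's theorem, but the statement assumes only an H-space structure, and Hopf's theorem in the form the paper cites (\cite[p.286]{Spa1966}) needs no associativity --- it requires only that the coproduct induced by $\mu$ be a counital algebra map, which any H-structure provides --- so you should not add that hypothesis. Second, the obstacle you single out, namely the splitting when $X$ has generators in infinitely many degrees, is genuine but is precisely what the finite type hypothesis handles: the number of generators in each degree equals $\dim(\pi_n(X)\otimes\Q)<\infty$ and all degrees are at least $2$, so in each cohomological degree only finitely many factors of $\prod_i K(\Q,n_i)$ contribute, the inclusion of the weak product into the product is a weak equivalence, and $H^{*}(\prod_i K(\Q,n_i);\Q)\cong\Lambda(\iota_{n_1},\iota_{n_2},\cdots)$; with that, $\phi^{*}$ carries generators bijectively to generators and the rational Whitehead theorem applies, so no Postnikov-tower or minimal-model detour is needed. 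The remaining identifications $[Y,\prod_i K(\Q,n_i)]\cong\prod_i H^{n_i}(Y;\Q)\cong\Hom_{{\rm Alg}}(\Lambda(x_1,x_2,\cdots),H^{*}(Y;\Q))$ and their compatibility with $f\mapsto H^{*}(f)$ are as routine as you say.
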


According to Arkowitz and Lupton \cite{AL1990}, we give $\Hom_{{\rm Alg}}(H^{*}(X;\Q ),H^{*}(Y;\Q ))$ an algebraic loop structure so that the above canonical map is an algebraic loop homomorphism.

\begin{defn}{\rm
Let $M=\Lambda (x_{i};i\in J )$ be a free commutative algebra generated by elements $x_{i}$ for $i\in J$. A homomorphism $\nu :M\to M\otimes M$ is called} a diagonal {\rm if the following diagram is commutative, where $\varepsilon :M\to \Q$ is augmentation:}
\[\xymatrix{
& M \ar[d]^{\nu } \ar[ld]_{\cong } \ar[rd]^{\cong }& \\
\Q \otimes M & M\otimes M \ar[l]^{\varepsilon \otimes id} \ar[r]_{id\otimes \varepsilon } &M\otimes \Q.
}\]
\end{defn}

Let $(X,\mu)$ be an H-space. Then $H^{*}(\mu):H^{*}(X;\Q)\to H^{*}(X;\Q)\otimes H^{*}(X;\Q)$ is a diagonal.

\begin{thm}{\rm \cite[Lemma 3.1]{AL1990}} \label{thm:AL}
Let $M=\Lambda (x_{1},x_{2},\cdots )$ be a free commutative algebra with the diagonal map $\nu :M\to M\otimes M$, and $A$ a graded algebra. We define the product of $\Hom_{{\rm Alg}}(M,A)$ by
\begin{eqnarray*}
\alpha \cdot \beta = m(\alpha \otimes \beta )\nu ,
\end{eqnarray*}
where $m$ is the product of $A$. Then $\Hom_{{\rm Alg}}(M,A)$ is an algebraic loop endowed with the product.
\end{thm}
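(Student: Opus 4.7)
The aim is to verify the three axioms of an algebraic loop for the set $\Hom_{\mathrm{Alg}}(M,A)$ equipped with the product $\alpha \cdot \beta = m(\alpha \otimes \beta)\nu$: existence of a two-sided unit, and unique solvability of the equations $\alpha \cdot \gamma = \beta$ and $\delta \cdot \alpha = \beta$ in $\gamma$ and $\delta$. The essential input from the hypotheses is that $\nu$ satisfies the counit relations and that $M$ is \emph{free} commutative, so an algebra homomorphism $M \to A$ is uniquely determined by, and may be freely prescribed on, its values on the generators $x_i$ subject only to degree.

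For the unit, let $\eta : M \to A$ be the composite $M \stackrel{\varepsilon}{\longrightarrow} \Q \stackrel{u}{\longrightarrow} A$, where $u$ denotes the unit map of $A$. Writing $\nu(x) = \sum_i x'_i \otimes x''_i$ for $x \in M$, one computes
\begin{equation*}
(\alpha \cdot \eta)(x) = \sum_i \alpha(x'_i)\, u\varepsilon(x''_i) = \alpha\bigl(\sum_i x'_i\, \varepsilon(x''_i)\bigr) = \alpha(x),
\end{equation*}
the last equality being the counit relation $(id \otimes \varepsilon)\nu = id$ from the diagonal axiom. The identity $\eta \cdot \alpha = \alpha$ is checked symmetrically using the other counit relation.

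For divisions I exploit freeness together with the structural fact that the counit axioms force
\begin{equation*}
\nu(x_i) = x_i \otimes 1 + 1 \otimes x_i + d_i, \qquad d_i \in \bar{M} \otimes \bar{M},
\end{equation*}
where $\bar M = \Ker \varepsilon$. Since $d_i$ has total degree $\deg x_i$ and each tensor factor of $d_i$ lies in positive degree, $d_i$ is a polynomial in generators of degree strictly smaller than $\deg x_i$. Evaluating $\alpha \cdot \gamma = \beta$ on $x_i$ thus reads
\begin{equation*}
\alpha(x_i) + \gamma(x_i) + m(\alpha \otimes \gamma)(d_i) = \beta(x_i),
\end{equation*}
and I solve inductively on $\deg x_i$ by setting $\gamma(x_i) = \beta(x_i) - \alpha(x_i) - m(\alpha \otimes \gamma)(d_i)$, whose last summand involves $\gamma$ only on generators of strictly smaller degree and is therefore already determined. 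Freeness of $M$ extends this prescription uniquely to an algebra homomorphism $\gamma : M \to A$ of the correct degrees. The equation $\delta \cdot \alpha = \beta$ is handled identically.

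The one step requiring genuine care is the structural observation that $\nu(x_i) - x_i \otimes 1 - 1 \otimes x_i \in \bar M \otimes \bar M$; this is the linchpin that makes the induction on degree go through. Once this is in hand, everything else is bookkeeping: degree preservation of the recursively defined $\gamma$ follows because $\alpha,\beta$ preserve degree and $d_i$ has the correct total degree, while the fact that an arbitrary prescription on generators of $M$ yields a well-defined algebra map is immediate from the freeness of $M$. I anticipate no serious obstacle beyond making the inductive bookkeeping precise.
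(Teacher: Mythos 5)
Your proof is correct and follows essentially the same route as the cited source \cite[Lemma 3.1]{AL1990}: the paper itself quotes from that proof the recursive formulas $\gamma(x_{i})=\beta(x_{i})-\alpha(x_{i})-m(\gamma\otimes\alpha)P(x_{i})$ with $P(x_{i})=\nu(x_{i})-x_{i}\otimes 1-1\otimes x_{i}$, which are exactly the relations you derive, and your key observation that $P(x_{i})\in\bar{M}\otimes\bar{M}$ permits induction on degree is the same linchpin. No discrepancy to report.
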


It follows from the proof of Theorem \ref{thm:AL} elements $\gamma _{1}$ and $\gamma _{2}\in \Hom_{{\rm Alg}}(M,A)$ satisfy the condition $\gamma _{1}\cdot \alpha = \beta$ and $\alpha \cdot \gamma _{2} = \beta $, then
\begin{align*}
&\gamma _{1}(x_{i})=\beta (x_{i})-\alpha (x_{i})-m(\gamma _{1}\otimes \alpha )P(x_{i}) \ \text{and}\\
&\gamma _{2}(x_{i})=\beta (x_{i})-\alpha (x_{i})-m(\alpha \otimes \gamma _{2} )P(x_{i}),
\end{align*}
where $P(x_{i})=\nu (x_{i})-x_{i}\otimes 1 -1\otimes x_{i}.$

\begin{defn}
A left inverse $\lambda$ $(${\rm respectively}, a right inverse $\rho )$ {\rm of an algebraic loop $\Hom_{{\rm Alg}}(M,A)$ are elements of $\Hom_{{\rm Alg}}(M,A)$ such that $\lambda \cdot id = e$ $($respectively, $id \cdot \rho=e )$, where $e$ is the unit of $\Hom_{{\rm Alg}}(M,A)$.
}
\end{defn}

Let $A$ be a graded algebra and $M=\Lambda (x_{i};i\in J )$ a free commutative Hopf algebra for which each $x_{i}$ are primitive $($that is, $\nu (x_{i})=x_{i}\otimes 1+1\otimes x_{i}.)$ We consider the canonical isomorphism
\begin{eqnarray*}
\Psi : \Hom_{{\rm Alg}}(M, A) \stackrel{\cong }{\rightarrow } \Hom_{\Q }(\Q \langle x_{i};j\in J \rangle , A)
\end{eqnarray*}
which is defined by
\begin{eqnarray*}
\Psi (\alpha )(x_{i})=\alpha (x_{i}) \ {\rm for} \ \alpha  \in \Hom_{{\rm Alg}}(M, A),
\end{eqnarray*}
where $\Q \langle x_{i};i\in J \rangle$ denotes the graded $\Q$-vector space with basis $x_{i}$ for $i\in J$. Let $\Hom_{\Q }(\Q \langle x_{i};i\in J \rangle , A)$ denote the set of $\Q$-linear maps from $\Q \langle x_{i};i\in J \rangle$ to $A$. Since $\Hom_{\Q }(\Q \langle x_{i};i\in J \rangle , A)$ is a $\Q$-vector space with respect to the canonical sum and the canonical scalar multiple, we can give the $\Q$-vector space structure to $\Hom_{{\rm Alg}}(M,A)$ via $\Psi $. We regard $\Hom_{\Q }(\Q \langle x_{i};i\in J \rangle , A)$ as an algebraic loop with the canonical sum. Then the isomorphism $\Psi $ is a morphism of algebraic loops. In fact,
\begin{eqnarray*}
\alpha \cdot \beta (x_{i}) = m(\alpha \otimes \beta )\nu (x_{i}) = \alpha (x_{i}) + \beta (x_{i}) = (\alpha +\beta )(x_{i}).
\end{eqnarray*}

Let $(X_{1},\mu_{1})$ be a $\Q$-local, simply-connected, homotopy associative and homotopy commutative H-space and the homotopy group of $X_{1}$ are of finite type, let $(X_{2},\mu_{2})$ be an H-space, and let $H^{*}(X_{1};\Q)=\Lambda (x_{1},x_{2},\cdots )$. Since $(X_{1},\mu_{1})$ is a homotopy associative and homotopy commutative H-space, we see that each $x_{i}$ is primitive; see \cite[Corollary 4.18]{MM1965}.

\begin{lem}\label{lem:3.3.1}
Let $X$ and $Y$ be spaces. Then
\begin{eqnarray*}
H^{n}(X\wedge Y;\Q) \cong  \{ x\otimes y \in H^{n}(X\times Y;\Q) \mid \deg \, x >0 , \deg \, y > 0 \} \ (n\geq 1).
\end{eqnarray*}
\end{lem}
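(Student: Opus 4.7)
The plan is to exploit the cofiber sequence
$$X \vee Y \xrightarrow{i} X \times Y \xrightarrow{q} X \wedge Y,$$
which gives a long exact sequence in reduced rational cohomology. The first step is to show that $i^{*}\colon \widetilde{H}^{*}(X \times Y;\Q) \to \widetilde{H}^{*}(X \vee Y;\Q)$ is surjective. This holds because the projections $p_{X}\colon X\times Y\to X$ and $p_{Y}\colon X\times Y\to Y$ restrict on $X\vee Y$ to the canonical retractions onto the wedge summands. Hence, under the identification $\widetilde{H}^{*}(X\vee Y;\Q)\cong \widetilde{H}^{*}(X;\Q)\oplus \widetilde{H}^{*}(Y;\Q)$ induced by the inclusions $X,Y\hookrightarrow X\vee Y$, the classes $p_{X}^{*}(u)$ and $p_{Y}^{*}(v)$ hit each summand. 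It follows that the long exact sequence splits into short exact sequences, and in particular
$$\widetilde{H}^{n}(X\wedge Y;\Q) \;\cong\; \ker\bigl(i^{*}\colon \widetilde{H}^{n}(X\times Y;\Q)\to \widetilde{H}^{n}(X\vee Y;\Q)\bigr).$$

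Next I would invoke the rational K\"unneth isomorphism $H^{*}(X\times Y;\Q)\cong H^{*}(X;\Q)\otimes H^{*}(Y;\Q)$ to express a general cohomology class as a sum of elementary tensors $x\otimes y$. Tracing the restriction through this identification, the composite
$$\widetilde{H}^{n}(X\times Y;\Q)\xrightarrow{i^{*}} \widetilde{H}^{n}(X\vee Y;\Q)\cong \widetilde{H}^{n}(X;\Q)\oplus \widetilde{H}^{n}(Y;\Q)$$
sends $x\otimes y$ to $(\varepsilon_{Y}(y)\cdot x,\; \varepsilon_{X}(x)\cdot y)$, where $\varepsilon_{X}$ and $\varepsilon_{Y}$ denote evaluation at the basepoints. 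Consequently an elementary tensor $x\otimes y$ lies in $\ker i^{*}$ precisely when both $\varepsilon_{X}(x)=0$ and $\varepsilon_{Y}(y)=0$, i.e.\ when $\deg x>0$ and $\deg y>0$. For $n\geq 1$ there is no $1\otimes 1$ term to worry about, so summing up,
$$\ker i^{*} \;=\; \bigl\{\, x\otimes y \in H^{n}(X\times Y;\Q)\,\bigm|\, \deg x>0,\ \deg y>0\,\bigr\},$$
which combined with the previous paragraph yields the claimed description.

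I do not anticipate any serious obstacle in this proof; the argument is essentially bookkeeping with the K\"unneth formula and the cofiber sequence. The one point requiring care is verifying that $i^{*}$ is surjective so that the long exact sequence splits, and tracking basepoint evaluations under the K\"unneth identification, which is harmless in positive total degree.
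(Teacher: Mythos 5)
Your argument is correct. It establishes the same key identification as the paper, namely $\widetilde{H}^{n}(X\wedge Y;\Q)\cong \Ker\bigl(i^{*}:H^{n}(X\times Y;\Q)\to H^{n}(X\vee Y;\Q)\bigr)$ followed by the K\"unneth bookkeeping, but it reaches that identification by a different route. The paper translates the statement into homotopy sets $[-,K(\Q,n)]$ and quotes Lemma \ref{lem:2.2.2} (Zabrodsky's short exact sequence of algebraic loops $e\to[X\wedge Y,K]\to[X\times Y,K]\to[X\vee Y,K]\to e$) to get injectivity of $q^{*}$ and exactness at the middle term, whereas you work directly with the cofibration $X\vee Y\to X\times Y\to X\wedge Y$, prove surjectivity of $i^{*}$ via the projections to split the long exact sequence, and never leave cohomology. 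Your version is more elementary and self-contained (it needs only the Puppe sequence and the rational K\"unneth theorem, plus the standing assumption that spaces are CW with nondegenerate basepoint so the pair is a cofibration); the paper's version is shorter on the page because Lemma \ref{lem:2.2.2} is already in place for the construction of H-deviations, and it packages injectivity and exactness in one citation. One small point of care in your write-up: $\ker i^{*}$ consists of all sums of elementary tensors with both factors of positive degree, not only the elementary tensors themselves, but this matches the (slightly abusive) set notation used in the statement and does not affect the argument.
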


\begin{proof}{\rm
For any $n\geq 1$, we consider the following commutative diagram:
\[\xymatrix{
[X\wedge Y,K(\Q ,n )] \ar[r]^{q^{*}} \ar[d]^{\cong } & [X\times Y, K(\Q ,n)] \ar[r]^{i^{*}} \ar[d]^{\cong } & [X\vee Y,K(\Q ,n)] \ar[d]^{\cong } \\
H^{n}(X\wedge Y;\Q) \ar[r]^{H^{n}(q)} & H^{n}(X\times Y;\Q ) \ar[r]^{H^{n}(i)} & H^{n}(X\vee Y ;\Q ),
}\]
where $K(\Q,n)$ is the Eilenberg-MacLane space. By Lemma \ref{lem:2.2.2}, $H^{n}(q)$ is injective and we have
$$
H^{n}(X\wedge Y;\Q)\cong \Ker H^{n}(i) = \{ x\otimes y \in H^{n}(X\times Y;\Q) \mid \deg \, x >0 , \deg \, y > 0 \}.
$$
}\end{proof}

Define the map
\begin{eqnarray*}
\overline{HD} : \Hom_{\Q}(\Q \langle x_{1} ,x_{2},\cdots \rangle , H^{*}(X_{2} ;\Q ))\rightarrow \Hom_{\Q }(\Q \langle x_{1} ,x_{2},\cdots \rangle , H^{*}(X_{2}\wedge X_{2} ;\Q ))
\end{eqnarray*}
by
\begin{eqnarray*}
\overline{HD}(\alpha )(x_{i})= P_{2}(\alpha (x_{i})) \ {\rm for} \ \alpha \in \Hom_{\Q}(\Q \langle x_{1} ,x_{2},\cdots \rangle , H^{*}(X_{2} ;\Q )),
\end{eqnarray*}
where $P_{2}(x) = H^{*}(\mu_{2})(x) -x\otimes 1 - 1\otimes x \ (x\in H^{*}(X_{2};\Q))$. Then we see that $\overline{HD}$ is a $\Q$-linear map.
\begin{prop}\label{prop:4.4}
The following diagram of algebraic loops is commutative$:$
\[\xymatrix{
\Hom_{\Q}(\Q \langle x_{1},x_{2},\cdots \rangle , H^{*}(X_{2} ;\Q )) \ar[r]^{\hspace{-1.5em}\overline{HD}}  & \Hom_{\Q }(\Q \langle x_{1},x_{2}, \cdots  \rangle , H^{*}(X_{2}\wedge X_{2} ;\Q ))\\
[X_{2},X_{1}] \ar[r]_{HD} \ar[u]^{\Psi H^{*}}_{\cong }   & [X_{2}\wedge X_{2},X_{1}]  \ar[u]_{\Psi H^{*}}^{\cong }.\\
}\]
\end{prop}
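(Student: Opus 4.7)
The plan is to verify commutativity of the diagram pointwise, by evaluating both composites on an arbitrary class $[f]\in [X_{2},X_{1}]$ and then on each generator $x_{i}$ of $H^{*}(X_{1};\Q)$, and to observe separately that every map in the diagram is a loop homomorphism. The map $\overline{HD}$ is $\Q$-linear and both its source and target carry the additive loop structure; $\Psi H^{*}$ is a loop isomorphism by Theorems \ref{thm:Sch} and \ref{thm:AL}; and $HD$ is a loop homomorphism by \cite[Corollary 2.4]{Kac1995}, since $(X_{1},\mu_{1})$ is homotopy associative and homotopy commutative. Since $\Psi H^{*}$ is bijective, set-theoretic commutativity is enough.

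Fix $[f]\in [X_{2},X_{1}]$ and write $\alpha = \Psi H^{*}(f)$, so $\alpha(x_{i}) = f^{*}(x_{i})$. From the definition of $\overline{HD}$,
\[
\overline{HD}(\alpha)(x_{i}) \;=\; P_{2}(f^{*}(x_{i})) \;=\; H^{*}(\mu_{2})(f^{*}(x_{i})) - f^{*}(x_{i})\otimes 1 - 1\otimes f^{*}(x_{i}),
\]
regarded as an element of $H^{*}(X_{2}\wedge X_{2};\Q)$ via the identification of Lemma \ref{lem:3.3.1}. For the other composite, the defining property of $HD(f)$ reads $q^{*}(HD(f)) = D(f\mu_{2},\mu_{1}(f\times f))$ in $[X_{2}\times X_{2}, X_{1}]$. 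Because $(X_{1},\mu_{1})$ is homotopy associative and homotopy commutative, $[X_{2}\times X_{2}, X_{1}]$ is an abelian group and $D(g,h)=[g]-[h]$, whence
\[
q^{*}(HD(f)) \;=\; [f\mu_{2}] \;-\; [\mu_{1}(f\times f)].
\]

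Applying $\Psi H^{*}$, which is a loop homomorphism and therefore converts this subtraction in the abelian loop $[X_{2}\times X_{2},X_{1}]$ into ordinary subtraction of $\Q$-linear maps, and evaluating at the primitive generator $x_{i}$ (primitivity by \cite[Corollary 4.18]{MM1965}), I obtain
\begin{align*}
H^{*}(q)\bigl(\Psi H^{*}(HD(f))(x_{i})\bigr)
  &= (f\mu_{2})^{*}(x_{i}) \;-\; (\mu_{1}(f\times f))^{*}(x_{i}) \\
  &= H^{*}(\mu_{2})(f^{*}(x_{i})) \;-\; f^{*}(x_{i})\otimes 1 \;-\; 1\otimes f^{*}(x_{i}) \\
  &= P_{2}(f^{*}(x_{i})) \\
  &= H^{*}(q)\bigl(\overline{HD}(\alpha)(x_{i})\bigr).
\end{align*}
Since $H^{*}(q)$ is injective by Lemma \ref{lem:3.3.1}, this forces $\Psi H^{*}(HD(f))(x_{i}) = \overline{HD}(\alpha)(x_{i})$ for every $i$, which is the desired commutativity.

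The step that I expect to require the most care is the translation of the loop-theoretic element $D(f\mu_{2},\mu_{1}(f\times f))$ into the honest cohomological difference $P_{2}(f^{*}(x_{i}))$ lying in $\Im H^{*}(q)$. Three ingredients must align: the abelian group structure on $[X_{2}\times X_{2}, X_{1}]$ (which comes from the hypotheses on $(X_{1},\mu_{1})$ and is what makes $D$ equal to a genuine difference); the primitivity of each $x_{i}$, which is what makes $\Psi H^{*}$ convert the loop product into addition on generators; and the H-space axioms for $\mu_{2}$, which ensure that $P_{2}(f^{*}(x_{i}))$ actually lies in the image of $H^{*}(q)$ so that the final appeal to injectivity is legitimate.
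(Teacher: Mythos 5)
Your proof is correct and follows essentially the same route as the paper's: both unwind the defining property $q^{*}(HD(f))=D(f\mu_{2},\mu_{1}(f\times f))$, use the abelian group structure on $[X_{2}\times X_{2},X_{1}]$ together with primitivity of the $x_{i}$ to identify $\Psi H^{*}$ of this deviation with $P_{2}(f^{*}(x_{i}))$, and conclude by the injectivity of $H^{*}(q)$ from Lemmas \ref{lem:2.2.2} and \ref{lem:3.3.1}. The only cosmetic difference is that you compute the difference directly where the paper cites the solution formula from the proof of Theorem \ref{thm:AL}.
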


\begin{proof}{\rm
By using the canonical isomorphism $\Psi $,
we see that for any $f \in [X_{2},X_{1}]$ there exists a unique element $\overline{D}(f ) \in \Hom_{\Q}(\langle x_{1} ,x_{2},\cdots \rangle , H^{*}(X_{2}\times X_{2} ;\Q )$ such that $\overline{D}(f ) + (H^{*}(f) \otimes H^{*}(f) )\Psi H^{*}(\mu_{1}) =  H^{*}(\mu_{2}) \Psi H^{*}(f) $. Therefore, it follows that $\overline{D}(f)= \Psi H^{*}(D(f\mu_{1}, \mu_{2}(f\times f)))$. Observe that each $x_{i}$ is primitive.
In view of the proof of Theorem \ref{thm:AL}, we have
$$
\overline{D}(f)(x_{i}) = P_{2}(\Psi H^{*}(f)(x_{i})).
$$
Lemma \ref{lem:3.3.1} implies that $\overline{D}(f)(x_{i})=H^{*}(q)\overline{HD}(\Psi H^{*}(f) )(x_{i})$. Hence, by the definition of an H-deviation, we obtain
$$
H^{*}(q)\Psi H^{*}(HD(f)) =\Psi H^{*}(D(f\mu_{1},\mu_{2}(f\times f))) = H^{*}(q)\overline{HD}(\Psi H^{*}(f) ).
$$
Since $H^{*}(q)$ is injective, it follows from Lemma \ref{lem:2.2.2} and Theorem \ref{thm:Sch} that $\Psi H^{*}(HD(f))=\overline{HD}(\Psi H^{*}(f))$.}
\end{proof}

Put $V=\Hom_{\Q }(\Q \langle x_{1} ,x_{2},\cdots \rangle , H^{*}(X_{2}\wedge X_{2} ;\Q ))$ and set
\begin{align*}
V_{{\rm inv}} &= \{ \alpha \in V \ | \ m_{2}(\lambda _{2}\otimes id)H^{*}(q)\alpha = m_{2}(id\otimes \rho _{2})H^{*}(q)\alpha \},\\
V_{{\rm p.a}} &= \{ \alpha \in V \ | \ \overline{m}_{2}(H^{*}(\mu_{2})\otimes id)H^{*}(q)\alpha = \overline{m}_{2}(id\otimes H^{*}(\mu_{2}))H^{*}(q)\alpha \},\\
V_{{\rm Mo}} &= \{ \alpha \in V \ | \ \overline{\Gamma}  _{{\rm Mo}}(\alpha  )=\overline{\Gamma }'  _{{\rm Mo}}(\alpha  ) \},\\
V_{{\rm s.a}} &= \{ \alpha \in V \ | \ H^{*}(\Gamma  _{{\rm s.a}})(\alpha \otimes \alpha) H^{*}(\mu_{1})=H^{*}(\Gamma'  _{{\rm s.a}})(\alpha \otimes \alpha) H^{*}(\mu_{1}) \},
\end{align*}
where $m_{2}$ is the product of $H^{*}(X_{2};\Q)$, $\overline{m}_{2}=m_{2}(m_{2}\otimes id)$, and $\lambda _{2}, \rho _{2}$ are a left inverse and a right inverse of $\Hom_{{\rm Alg}}(H^{*}(X_{2};\Q),H^{*}(X_{2};\Q))$, respectively. Let
$$
\overline{\Gamma}  _{{\rm Mo}}(\alpha  )=H^{*}(\Gamma  _{{\rm Mo}}(\omega )), \ \overline{\Gamma}'  _{{\rm Mo}}(\alpha  )=H^{*}(\Gamma  _{{\rm Mo}}(\omega ))
$$
where $\omega :X_{2}\wedge X_{2}\to X_{1}$ satisfy $H^{*}(\omega)=\alpha $.
 Then, we see that $V_{{\rm inv}}$, $V_{{\rm p.a}}$, $V_{{\rm Mo}}$ and $V_{{\rm s.a}}$ are subspaces of $V$.

\begin{thm}\label{lem:4.4} The following statements hold.
\ \\[-1.2em]
\begin{enumerate}
\item If $(X_{2},\mu _2)$ is inversive, then $\Im \overline{HD} \subset V_{{\rm inv}}$ and the canonical map $S_{{\rm inv}} \to V_{{\rm inv}}/ \Im \overline{HD}$ is an isomorphism of algebraic loops. 
\item If $(X_{2},\mu _2)$ is power associative, then $\Im \overline{HD} \subset V_{{\rm p.a}}$ and the canonical map $S_{{\rm p.a}} \to V_{{\rm p.a}}/ \Im \overline{HD}$ is isomorphism of algebraic loops. 
\item If $(X_{2},\mu _2)$ is Moufang, then $\Im \overline{HD} \subset V_{{\rm Mo}}$ and the canonical map $S_{{\rm Mo}} \to V_{{\rm Mo}}/ \Im \overline{HD}$ is isomorphism of algebraic loops.
\item If $(X_{2},\mu _2)$ is symmetrically associative, then $\Im \overline{HD} \subset V_{{\rm s.a}}$ and the canonical map $S_{{\rm s.a}} \to V_{{\rm s.a}}/ \Im \overline{HD}$ is isomorphism of algebraic loops.
\end{enumerate}
\end{thm}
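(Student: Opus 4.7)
The plan is to promote the algebraic loop isomorphism
$\Psi H^* : [X_2 \wedge X_2, X_1] \xrightarrow{\cong} V$
(furnished by Theorem \ref{thm:Sch} together with the canonical map $\Psi$) to a bijection of pairs $(G_P, \Im HD) \cong (V_P, \Im \overline{HD})$ for each property $P \in \{\mathrm{inv}, \mathrm{p.a}, \mathrm{Mo}, \mathrm{s.a}\}$, and then pass to quotients. Rationality of $X_1$ combined with primitivity of its generators $x_i$ makes $\Psi H^*$ into a $\Q$-linear isomorphism under which the induced loop product is simply addition, which is what converts the four conditions of Section 3 into the four $V_P$ conditions of Section 4.

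First, I would record that the commutative square of Proposition \ref{prop:4.4} yields $\Psi H^*(\Im HD) = \Im \overline{HD}$. By Lemma \ref{lem:4.2}, the hypothesis that $(X_2, \mu_2)$ has property $P$ guarantees $\Im HD \subset G_P$. Once we have established $\Psi H^*(G_P) = V_P$, the inclusion $\Im \overline{HD} \subset V_P$ is automatic, and the restriction of $\Psi H^*$ descends to the desired isomorphism of algebraic loops $S_P = G_P/\Im HD \to V_P/\Im \overline{HD}$.

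The heart of the argument is thus to verify $\Psi H^*(G_P) = V_P$. Since $\Psi H^*$ is a bijection and the conditions defining both $G_P$ and $V_P$ are equalities of morphisms, it suffices to apply $H^*$ to the topological condition and check that it matches the algebraic one. For the inversive case, $\omega q(l_2 \times id_2)\Delta_2 \simeq \omega q(id_2 \times r_2)\Delta_2$ translates, after applying $H^*$ and evaluating on a generator $x_i$, into $m_2(H^*(l_2) \otimes id)H^*(q)\alpha(x_i) = m_2(id \otimes H^*(r_2))H^*(q)\alpha(x_i)$ with $\alpha = \Psi H^*(\omega)$. The key observation is that $H^*(l_2)$ and $H^*(r_2)$ coincide respectively with $\lambda_2$ and $\rho_2$ inside $\Hom_{\mathrm{Alg}}(H^*(X_2;\Q), H^*(X_2;\Q))$: since $l_2$ and $r_2$ are continuous their cohomology maps are algebra homomorphisms, and the defining relations $\mu_2(l_2 \times id_2)\Delta_2 \simeq * \simeq \mu_2(id_2 \times r_2)\Delta_2$ give $H^*(l_2) \cdot id = e = id \cdot H^*(r_2)$ in the algebraic loop, so uniqueness of solutions in an algebraic loop forces the identification. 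The power-associative, Moufang and symmetrically associative cases follow the same template applied to the homotopy diagrams defining $G_{\mathrm{p.a}}, G_{\mathrm{Mo}}, G_{\mathrm{s.a}}$.

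The main obstacle I expect is the bookkeeping in the Moufang case, where four compositions out of $X_2^3$ must be compared and one has to track the interaction of $H^*(q), H^*(\mu_2), H^*(\theta_2)$ and the diagonal of $X_2^3$ under the algebra product on $H^*(X_1;\Q) = \Lambda(x_1, x_2, \ldots)$. Primitivity of the $x_i$ is essential here, as it reduces the induced loop product on $\Hom_{\mathrm{Alg}}(H^*(X_1;\Q), H^*(-))$ to a sum at the level of generators and so makes the translation of $\Gamma_{\mathrm{Mo}}, \Gamma'_{\mathrm{Mo}}$ into $\overline{\Gamma}_{\mathrm{Mo}}, \overline{\Gamma}'_{\mathrm{Mo}}$ a routine, if lengthy, unraveling.
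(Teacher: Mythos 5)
Your proposal is correct and follows essentially the same route as the paper: the paper's own proof is a one-line appeal to Lemma \ref{lem:4.2} (giving $\Im HD \subset G_{P}$) and Proposition \ref{prop:4.4} (giving $\Psi H^{*}(\Im HD)=\Im \overline{HD}$), and your argument simply fills in the implicit verification that $\Psi H^{*}$ carries $G_{P}$ onto $V_{P}$ (including the identification $H^{*}(l_{2})=\lambda_{2}$, $H^{*}(r_{2})=\rho_{2}$ via uniqueness of inverses in the algebraic loop) before passing to quotients. The only remark worth adding is that in the Moufang case the paper defines $\overline{\Gamma}_{{\rm Mo}}(\alpha)$ as $H^{*}(\Gamma_{{\rm Mo}}(\omega))$ for $\omega$ with $H^{*}(\omega)=\alpha$, so the translation you anticipate as lengthy bookkeeping is there tautological.
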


\begin{proof}{\rm
The assertions follow from Lemma \ref{lem:4.2} and Proposition \ref{prop:4.4}.
}
\end{proof}

\begin{lem}\label{lem:3.3.6}
Let $V^{k}= \Hom_{\Q}(\Q\langle x_{k} \rangle , H^{*}(X_{2}\wedge X_{2};\Q))$ for $k=1,2,\cdots$. Then the $\Q$-linear map
\begin{eqnarray*}
\xi :\displaystyle\bigoplus _{k\geq 1}V^{k} \longrightarrow V ; \ \ \xi (\alpha _{1}, \alpha _{2},\cdots ) (x_{k})=\alpha _{k}(x_{k}) \ (\alpha _{k}\in V^{k})
\end{eqnarray*}
is an isomorphism. Moreover, let
\begin{align*}
& V_{{\rm inv}}^{k} = \{ \alpha \in V^{k} \mid m_{2}(\lambda _{2}\otimes id)H^{*}(q)\alpha = m_{2}(id\otimes \rho _{2})H^{*}(q)\alpha  \} ,\\
& V_{{\rm p.a}}^{k} = \{ \alpha \in V^{k}  \mid \overline{m}_{2}(H^{*}(\mu_{2})\otimes id)H^{*}(q)\alpha = \overline{m}_{2}(id\otimes H^{*}(\mu_{2}))H^{*}(q)\alpha \} ,\\
&V_{{\rm Mo}}^{k} = \{ \alpha \in V^{k} \ | \ \overline{\Gamma}  _{{\rm Mo}}(\alpha  )=\overline{\Gamma }'  _{{\rm Mo}}(\alpha  ) \} \ {\rm and}\\
&V_{{\rm s.a}}^{k} = \{ \alpha \in V^{k} \ | \ H^{*}(\Gamma  _{{\rm s.a}})(\alpha \otimes \alpha) H^{*}(\mu_{1})=H^{*}(\Gamma'  _{{\rm s.a}})(\alpha \otimes \alpha) H^{*}(\mu_{1})\} .
\end{align*}
We define a $\Q$-linear map
\begin{eqnarray*}
\overline{HD}^{ \hspace{0.15em} k} : \Hom_{\Q}(\Q \langle x_{k} \rangle , H^{*}(X_{2} ;\Q ))\rightarrow V^{k}
\end{eqnarray*}
by
\begin{eqnarray*}
\overline{HD}^{\hspace{0.15em}i}(\alpha )(x_{k})= P_{2}(\alpha (x_{k})) , \ \alpha \in \Hom_{\Q}(\Q \langle x_{k} \rangle , H^{*}(X_{2} ;\Q )).
\end{eqnarray*}
Then the restrictions
\begin{align*}
&\displaystyle\bigoplus _{k\geq 1}V_{{\rm inv}}^{k} \longrightarrow V_{{\rm inv}} & &\displaystyle\bigoplus _{k\geq 1}V_{{\rm p.a}}^{k} \longrightarrow V_{{\rm p.a}} &\displaystyle\bigoplus _{k\geq 1}V_{{\rm Mo}}^{k} \longrightarrow V_{{\rm Mo}} \\
&\displaystyle\bigoplus _{k\geq 1}V_{{\rm s.a}}^{k} \longrightarrow V_{{\rm s.a}} & &\displaystyle\bigoplus _{k\geq 1}\Im \overline{HD}^{ \hspace{0.15em} k} \longrightarrow \Im \overline{HD} &
\end{align*}
of $\xi $ to $\displaystyle\bigoplus _{k\geq 1}V_{{\rm inv}}^{k}$, $\displaystyle\bigoplus _{k\geq 1}V_{{\rm p.a}}^{k}$, $\displaystyle\bigoplus _{k\geq 1}V_{{\rm Mo}}^{k}$, $\displaystyle\bigoplus _{k\geq 1}V_{{\rm s.a}}^{k}$ and $\displaystyle\bigoplus _{k\geq 1}\Im \overline{HD}^{ \hspace{0.15em} k}$ are all isomorphisms.
\end{lem}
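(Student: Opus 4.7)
The plan is to reduce everything to the observation that each subspace-defining equation and the map $\overline{HD}$ are $\Q$-linear in $\alpha$ and, when evaluated on any single generator $x_k$, depend on $\alpha$ only through the value $\alpha(x_k)$. Granted this, together with the elementary fact that $\xi$ is a $\Q$-linear isomorphism (which follows from $\Q\langle x_1,x_2,\ldots\rangle=\bigoplus_{k\geq 1}\Q\langle x_k\rangle$ and the universal property of the direct sum, the finite-type hypothesis guaranteeing well-definedness on each graded piece), all of the restriction statements then follow at once.

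For the inversive and power-associative cases, the defining equations $m_2(\lambda_2\otimes id)H^*(q)\alpha=m_2(id\otimes\rho_2)H^*(q)\alpha$ and $\overline{m}_2(H^*(\mu_2)\otimes id)H^*(q)\alpha=\overline{m}_2(id\otimes H^*(\mu_2))H^*(q)\alpha$ are visibly $\Q$-linear in $\alpha$, and their evaluations at $x_k$ use $\alpha$ only through $\alpha(x_k)$. Hence $\alpha\in V_{\rm inv}$ if and only if each component $\alpha_k\in V^k_{\rm inv}$, and similarly for $V_{\rm p.a}$, yielding the first two restriction isomorphisms.

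The main obstacle is the Moufang and symmetrically associative cases, in which the expressions $\overline{\Gamma}_{\rm Mo}(\alpha)$, $\overline{\Gamma}'_{\rm Mo}(\alpha)$, and their s.a.\ analogues ostensibly involve two copies of $\omega$ and thus appear quadratic in $\alpha$. The key is to invoke the primitivity of each generator $x_i$ of $H^*(X_1;\Q)$ established earlier via \cite[Corollary 4.18]{MM1965}: since $H^*(\mu_1)(x_i)=x_i\otimes 1+1\otimes x_i$, a direct unwinding of the definitions shows that both sides of the Moufang (resp.\ s.a.)\ equation, evaluated on $x_i$, reduce to sums of terms each $\Q$-linear in $\alpha(x_i)$. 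Consequently, these conditions also decompose index-by-index, giving $\xi^{-1}(V_{\rm Mo})=\bigoplus_k V^k_{\rm Mo}$ and analogously for $V_{\rm s.a}$.

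Finally, since $\overline{HD}$ is defined pointwise by $\overline{HD}(\alpha)(x_k)=P_2(\alpha(x_k))$, it manifestly preserves the direct-sum decomposition, whence $\xi$ restricts to an isomorphism between $\bigoplus_k\Im\overline{HD}^{\hspace{0.15em}k}$ and $\Im\overline{HD}$. Combining the four decompositions above completes the proof.
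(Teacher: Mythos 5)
Your proof is correct and follows essentially the same route as the paper, whose own proof merely exhibits the inverse $\xi'(\alpha)=(\alpha i_{1},\alpha i_{2},\cdots)$ and leaves the compatibility of $\xi$ with the distinguished subspaces implicit. Your added observation that the Moufang and symmetrically associative conditions, though superficially quadratic in $\alpha$, localize to each generator $x_{k}$ because every $x_{i}$ is primitive is precisely the detail the paper suppresses.
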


\begin{proof}
{\rm
The $\Q$-linear map 
\begin{eqnarray*}
\xi ' :V\longrightarrow \displaystyle\bigoplus _{k\geq 1}V^{k}; \ \ \xi '(\alpha )=(\alpha i_{1},\alpha i_{2},\cdots )
\end{eqnarray*}
is an inverse map of $\xi $, where $i_{k}:\Q\langle x_{k} \rangle \longrightarrow \Q \langle x_{1},x_{2},\cdots \rangle$ is the inclusion.
}
\end{proof}

By Lemma \ref{lem:3.3.6}, for the study of $V_{{\rm inv}}/\Im \overline{HD}$, $V_{{\rm p.a}}/\Im \overline{HD}$, $V_{{\rm Mo}}/\Im \overline{HD}$ and $V_{{\rm s.a}}/\Im \overline{HD}$, it is enough to investigate these vector space in the case where $H^{*}(X_{1};\Q)=\Lambda (x)$. In what follows, we give some examples of $V$, $V_{{\rm inv}}$, $V_{{\rm p.a}}$, $V_{{\rm Mo}}$, $V_{{\rm s.a}}$ and $\Im \overline{HD}$.

\begin{ex}\label{ex:4.9}
{\rm
Let $X_{2}$ be a rational H-space such that $H^{*}(X_{2};\Q)=\Lambda (y)$ and that $\deg \, y$ is odd. Then $X_{2}$ is a homotopy associative H-space for any multiplication. Then we have $V=V_{{\rm inv}}=V_{{\rm p.a}}=V_{{\rm Mo}}=V_{{\rm s.a}}$ and $\Im \overline{HD}=\{ 0 \}$.
}
\end{ex}

\begin{ex}\label{ex:3.3.8}{\rm
Let $X_{2}$ be a rational H-space such that $H^{*}(X_{2};\Q)=\Lambda (y)$ and that $\deg \, y$ is even. Then $X_{2}$ is a homotopy associative H-space for any multiplication. Then one has the following:\\
1. If $\deg \, x=2\deg \, y$, then $$V=V_{{\rm inv}}=V_{{\rm p.a}}=V_{{\rm Mo}}=V_{{\rm s.a}}=\Im \overline{HD}\cong  \Q.$$
2. If $\deg \, x=3\deg \, y$, then $$V\cong  \Q^{2} \ \text{and} \ V_{{\rm inv}}=V_{{\rm p.a}}=V_{{\rm Mo}}=V_{{\rm s.a}} = \Im \overline{HD} \cong \{(r_{1},r_{2})\in \Q^{2} \mid  r_{1}=r_{2}\}.$$
3. If $\deg \, x = m\deg \, y$ and $m\geq  4$, then $V\cong \Q ^{m-1},$\\
\hspace{1em} $V_{{\rm inv}}$
$\cong    
\left\{
\begin{array}{l}
V \ (m:{\rm even})\\
\Bigl\{ (r_{1},\cdots ,r_{m-1})\in \Q^{m-1} \mid \displaystyle\sum^{m-1}_{j=1}(-1)^{j+1}r_{j} =0\Bigr\} \ (m:{\rm odd})
\end{array}
\right.
$
\\
\hspace{1em}$V_{{\rm p.a}}\cong \Bigl\{ (r_{1},\cdots ,r_{m-1})\in \Q^{m-1} \mid \displaystyle\sum^{m-1}_{j=1}(2^{j}-2^{m-j})r_{j} =0\Bigr\}$\\
\hspace{1em}$V_{{\rm s.a}}$
$\cong    
\left\{
\begin{array}{l}
\{ (r_{1},\cdots ,r_{m-1})\in \Q^{m-1} \mid r_{l}=r_{m-l}, \  l=1,\cdots , \frac{m-2}{2}\} \ (m:{\rm even})\\
\{ (r_{1},\cdots ,r_{m-1})\in \Q^{m-1} \mid r_{l}=r_{m-l}, \  l=1,\cdots , \frac{m-1}{2}\} \ (m:{\rm odd})
\end{array}
\right.
$\\
\hspace{1em}$\Im \overline{HD}\cong \{ (r_{1},\cdots ,r_{m-1})\in \Q^{m-1} \mid$\\
\hspace{10em} $(m-1)!r_{1}=\cdots =i!(m-i)!r_{i}=\cdots = (m-1)!r_{m-1} \}$
\\
4. Other cases, $V=V_{{\rm inv}}=V_{{\rm p.a}}=V_{{\rm Mo}}=V_{{\rm s.a}}=\Im \overline{HD}=\{ 0 \}$.
}
\end{ex}
\begin{proof}
{\rm 
In the statement 2, we choose the basis $\sigma _{1}$ and $\sigma _{2}$ of V defined by
\begin{eqnarray*}
\sigma _{1}(x)= y\otimes y^{2} \ \text{and} \ \sigma _{2}(x)= y^{2}\otimes y.
\end{eqnarray*}
We define the isomorphism of $\Q$-vector spaces $f:V\to \Q^{2}$ by $f(\sigma _{1})=(1,0)$ and $f(\sigma _{2})=(0,1)$. Then, for any element $\alpha =r_{1}\sigma _{1}+r_{2}\sigma _{2} \ (r_{i}\in \Q)$ of V, we have
\begin{align*}
&m_{2}(\lambda _{2}\otimes id)H^{*}(q)\alpha (x)- m_{2}(id\otimes \rho _{2})H^{*}(q)\alpha (x)= (-2r_{1}+2r_{2})y^{3}, \\
&\overline{m}_{2}(H^{*}(\mu_{2})\otimes id)H^{*}(q)\alpha (x)- \overline{m}_{2}(id\otimes H^{*}(\mu_{2}))H^{*}(q)\alpha (x)=(-2r_{1}+2r_{2})y^{3}.
\end{align*}
The element $\alpha $ is in $\Im \overline{HD}$ if and only if there exists $\beta $ in $\Hom_{\Q}(\Q \langle x \rangle , H^{*}(X_{2} ;\Q ))$ such that $\alpha =\overline{HD}(\beta )$. Put $\beta (x)=ry^{3} \ (r\in \Q)$, then we have $r_{1}=3r=r_{2}$. Hence, $V_{{\rm p.a}}=V_{{\rm Mo}}=V_{{\rm s.a}} = \Im \overline{HD}$. Therefore 2 is shown. We can prove the statements 1,3 and 4 similar to that computations.
}
\end{proof}

\noindent
{\it Proof of Assertion \ref{ex1}.}
If $n=km$ and $k\geq 4$ is even, we have
$$ V_{{\rm Mo}}/\Im HD \subseteq V_{{\rm s.a}}/\Im HD \subsetneq V_{{\rm p.a}}/\Im HD \subsetneq V/\Im HD  =V_{{\rm inv}}/\Im HD$$
by Lemma \ref{lem:3.3.6} and Example \ref{ex:3.3.8}. Thus the statement (1) holds. A similar argument show the other cases.
\hfill\qed

\section{Acknowledgment}
The author is deeply grateful to Katsuhiko Kuribayashi and Ryo Takahashi who provided helpful comments and suggestions.

\end{document}